\theoremstyle{definition}
\newcommand{\Sc}{\mathcal{S}}
\newcommand{\rbb}{\mathbb{R}}
\newcommand{\W}{\mathcal{W}}
\renewcommand{\H}{\mathcal{H}}
\newcommand{\la}{\langle}
\newcommand{\ra}{\rangle}
\newcommand{\xbar}{\overline{x}}
\newcommand{\vbar}{\overline{v}}
\newcommand{\mi}{\wedge}
\newcommand{\f}{\varphi}
\newcommand{\E}[1]{\mathbb{E}\left[#1\right]}
\newcommand{\Enone}[1]{\mathbb{E}#1}
\renewcommand{\P}[1]{\mathbb{P}\left\{#1\right\}}
\newcommand{\Pnone}{\mathbb{P}}
\newcommand{\wt}[1]{ \widetilde{#1} }
\newcommand{\Hs}{\mathcal{H}_{-s}}
\newcommand{\Hsdot}{\dot{\mathcal{H}}_{-s}}
\theoremstyle{definition}
\theoremstyle{plain}
\newtheorem{theorem}{Theorem}[section]
\newtheorem{lemma}[theorem]{Lemma}
\newtheorem{assumption}[theorem]{Assumption}
\newtheorem{proposition}[theorem]{Proposition}
\newtheorem{remark}[theorem]{Remark}
\numberwithin{equation}{section}
\title{The small-mass limit and white-noise limit of an infinite dimensional Generalized Langevin Equation}
\author{Hung D.~Nguyen$^1$}
\thanks{\noindent \hspace{-0.52cm} $^1$ Department of Mathematics, Tulane University. 6823 St Charles Ave, New Orleans, LA, 70118.}
\begin{document}

\maketitle

%

\begin{abstract}
We study asymptotic properties of the Generalized Langevin Equation (GLE) in the presence of a wide class of external potential wells with a power-law decay memory kernel. When the memory can be expressed as a sum of exponentials, a class of Markovian systems in infinite-dimensional spaces is used to represent the GLE. The solutions are shown to converge in probability in the small-mass limit and the white-noise limit to appropriate systems under minimal assumptions, of which no global Lipschitz condition is required on the potentials. With further assumptions about space regularity and potentials, we obtain $L^1$ convergence in the white-noise limit.
\end{abstract}
\noindent{\it Keywords\/}: Markov processes, power-law decay, memory kernel

\section{Introduction}
The Generalized Langevin Equation is a Stochastic Integro-Differential Equation that is commonly used to model the velocity $\{v(t)\}_{t \geq 0}$ of a microparticle in a thermally fluctuating viscoelastic fluid \cite{mason1995optical,kou2008stochastic,hohenegger2017fluid}. It can be written in the following form
\begin{equation}\label{eqn:GLE}
\begin{aligned}
\dot{x}(t)&=v(t),\\
m\, \dot{v}(t)&=-\gamma v(t)-\Phi'(x(t))-\int_{-\infty}^t\!\!K(t-s)v(s)\, ds+F(t)+\sqrt{2\gamma} \, \dot{W}(t),
\end{aligned}
\end{equation}
where $m > 0$ is the particle mass, $\gamma>0$ is the viscous drag coefficient, $\Phi$ is a potential well and $K(t)$ is a phenomenological memory kernel that summarizes the delayed drag effects by the fluid on the particle. The noise has two components: $F(t)$ is a mean-zero, stationary Gaussian process with autocovariance $\E{F(t)F(s)}=K(|t-s|)$, and $W(t)$ is a standard two-sided Brownian motion. The appearance of $K(t)$ in the autocovariance of $F(t)$ is a manifestation of the Fluctuation-Dissipation relationship, originally stated in~\cite{kubo1966fluctuation}, see also \cite{pavliotis2014stochastic} for a more systematic review.

When there are no external forces, the GLE has the form
\begin{equation}\label{eqn:GLE:linear}
\begin{aligned}
\dot{x}(t)&=v(t),\\
m\, \dot{v}(t)&=-\gamma v(t)-\int_{-\infty}^t\!\!K(t-s)v(s)\, ds+F(t)+\sqrt{2\gamma} \, \dot{W}(t),
\end{aligned}
\end{equation}
it was shown in~\cite{mckinley2017anomalous} that with extra assumptions, when $K$ is integrable, the \emph{Mean-Squared Displacement} (MSD) $\Enone\, x(t)^2$ satisfies $\Enone\, x(t)^2 \sim t$ as $t\to\infty$; otherwise, if $K(t)\sim t^{-\alpha}$, $\alpha\in(0,1)$, then $\Enone x(t)^2\sim t^{\alpha}$ as $t\to\infty$. The former asymptotic behavior of the MSD is called \emph{diffusive} whereas the latter is called \emph{subdiffusive}. Here the notation $f(t)\sim g(t)$ as $t\to\infty$ means
\begin{align*}
\frac{f(t)}{g(t)}\to c\in(0,\infty),\quad\text{as}\quad t\to\infty.
\end{align*}
It has been observed that when $K(t)$ is written as a sum of exponential functions,  by adding auxiliary terms, the non-Markovian GLE~\eqref{eqn:GLE} can be mapped onto a multi-dimensional Markovian system~ \cite{mori1965continued,zwanzig1973nonlinear,kupferman2002long,kupferman2004fractional}. If $K(t)$ has the form of a finite sum of exponentials, the resulting finite-dimensional SDE was studied extensively in e.g.~\cite{ottobre2011asymptotic,pavliotis2014stochastic}. One can show that these systems admit a unique invariant structure with geometric ergodicity. Moreover, the marginal density of the pair $(x,v)$ is independent of $K(t)$. It is also worthwhile to note that, in the case of the linear GLE~\eqref{eqn:GLE:linear}, these memory kernels $K(t)$ produce \emph{diffusive} MSD since they are integrable~\cite{mckinley2017anomalous}. In order to include memory kernels that have a  power-law decay, one has to consider an infinite sum of exponentials resulting in a corresponding infinite-dimensional system. 

From now on, we shall adopt the notations from~\cite{glatt2018generalized}. Let $\alpha, \beta >0$ be given, and define constants $c_k, \lambda_k$, $k=1,2, \ldots$, by 
\begin{align} \label{c-k}
c_k=\frac{1}{k^{1+\alpha\beta}},\ \lambda_k=\frac{1}{k^\beta}.
\end{align} 
We introduce the memory kernel $K(t)$ given by
\begin{equation}
\label{eqn:K}
K(t)=\sum_{k\geq 1} c_k e^{-\lambda_k t}.
\end{equation}
It is shown that (see Example 3.3 of~\cite{abate1999infinite}) with this choice of constants $c_k$ and $\lambda_k$, $K(t)$ obeys a power-law decay, namely
\begin{equation} \label{lim:K}
K(t) \sim t^{-\alpha} \,\,\, \text{ as } \,\,\, t\rightarrow \infty, 
\end{equation}
where $\alpha$ is as in~\eqref{c-k}. The constant $\beta$ is an auxiliary parameter that is only assumed to be positive. When $\alpha>1$, as mentioned above in the linear GLE~\eqref{eqn:GLE:linear}, $K(t)$ is in the \emph{diffusive} regime, whereas for $\alpha\in(0,1)$, $K$ belongs to the \emph{subdiffusive} regime. There is however no claim regarding to the case $\alpha=1$. For such reason, it is called the \emph{critical regime}. With $K(t)$ defined as in~\eqref{eqn:K}, the GLE~\eqref{eqn:GLE} is expressed as the following infinite-dimensional system \cite{glatt2018generalized}
\begin{equation}\label{eqn:GLE-Markov}
\begin{aligned}
d x(t) &= v(t)\, d t, \\
m\, d v(t)&=\big(\!\!-\gamma v(t)-\Phi'(x(t))-\sum_{k\geq 1} \sqrt{c_k} z_k(t)\big)\,dt+\sqrt{2\gamma}\, dW_0(t), \\
d z_k(t)&=\left(-\lambda_k z_k(t)+ \sqrt{c_k}v(t)\right) \, dt+\sqrt{2\lambda_k}\, dW_k(t),\qquad k\geq 1, 
\end{aligned}	
\end{equation}
where $W_k$ are independent, standard Brownian motions.  In~\cite{glatt2018generalized}, the well-posedness and the existence of invariant structures of~\eqref{eqn:GLE-Markov} were studied for all $\alpha>0$. Employing a recent advance tool called \emph{asymptotic coupling}~\cite{hairer2011asymptotic,glatt2017unique}, it can be shown that~\eqref{eqn:GLE-Markov} admits a unique invariant distribution in the diffusive regime, ($\alpha>1$). However, ergodicity when $\alpha\in(0,1]$ remains an open question.

The goal of this note is to give an analysis of the behavior of~\eqref{eqn:GLE-Markov} in two different limits. 

First, we are interested in the asymptotic behavior of~\eqref{eqn:GLE-Markov} concerning the small-mass limit, namely taking $m$ to zero on the LHS of the second line in~\eqref{eqn:GLE-Markov}. Due to the random perturbations, the velocity $v(t)$ is fluctuating fast whereas the displacement $x(t)$ is still moving slow. We hence would like to find a process $u(t)$ such that on any compact interval $[0,T]$,
\begin{align*}
\lim_{m\downarrow 0}\sup_{0\leq r\leq t}|x(r)-u(r)|=0,
\end{align*}
where the limit holds in an appropriate sense. Such statement is called Smoluchowski-Kramer approximation \cite{freidlin2004some}. There is a literature of analyzing asymptotic behaviors for fast-slow processes when taking zero-mass limit. Earliest results in this direction seem to be the works of~\cite{kramers1940brownian,von1916drei}. For more recent studies in finite dimensional systems, we refer to~\cite{freidlin2004some} in which the convergence in probability is established with constant drag and multiplicative noise. Under stronger assumptions and using appropriate time rescaling, weak convergence is proved in~\cite{pardoux2003poisson} where the friction is also state-dependent. When the potential is assumed to be Lipschitz, one can obtain better results in $L^p$, following the works of~\cite{hottovy2015smoluchowski,lim2017homogenization}. Without such assumption, convergence in probability is established in~\cite{herzog2016small} provided appropriate Lyapunov controls. In addition, limiting systems are observed numerically in \cite{hottovy2012noise,hottovy2012thermophoresis}. Similar analysis in infinite dimensional settings for semi linear wave equations are studied in a series of paper~\cite{cerrai2006smoluchowski,cerrai2006smoluchowski2,cerrai2014smoluchowski,cerrai2016smoluchowski}.  The systems therein are shown to converge to a heat equation under different assumptions about non linear drifts. Motivated by~\cite{herzog2016small,hottovy2015smoluchowski}, in this note, we establish the convergence in probability for~\eqref{eqn:GLE-Markov}, cf. Theorem~\ref{thm:limit:zeromass}. The technique that we employ is inspired by those in~\cite{herzog2016small}. 

Then, we study the white-noise limit of~\eqref{eqn:GLE-Markov}, namely as the random force $F(t)$ in~\eqref{eqn:GLE} converges to a white noise process. Under different conditions on the potential and space regularity, we aim to find a pair of processes $(u(t),p(t))$ that can be approximated by the $(x(t),v(t))$-component in~\eqref{eqn:GLE-Markov}. While there is a rich history on the small-mass limit, the white-noise limit seems to receive less attention. Nevertheless, there have been many works on the asymptotics of deterministic systems with memories. To name a few in this direction, we refer the reader to \cite{conti2006singular,gatti2005navier,grasselli2002uniform}. With regards to the white-noise limit of our system, we establish the convergence in different modes for a wide class of potentials, that are not necessarily Lipschitz or bounded. While the proof of Theorem~\ref{thm:limit:whitenoise:probconverge} concerning probability convergence shares the same arguments with that of Theorem~\ref{thm:limit:zeromass}, the result in Theorem~\ref{thm:limit:whitenoise:L1converge} concerning strong topology requires more work, where we have to estimate a universal bound on the solutions of~\eqref{eqn:GLE-Markov} using appropriate Lyapunov structures, cf. Proposition~\ref{prop:limit:whitenoise:L2bound}. To the best of our knowledge, these results seem to be new in infinite-dimensional stochastic differential equations with memory. Particularly, they (cf. Theorem~\ref{thm:limit:whitenoise:probconverge} and Theorem~\ref{thm:limit:whitenoise:L1converge}) generalize analogous results for finite-dimensional settings in~\cite{ottobre2011asymptotic}, where $K(t)$ has a form of finite sum of exponentials. 

The rest of this paper is organized as follows. We introduce notations and summarize our main results in Section~\ref{sec:results}. The small-mass limit is addressed rigorously in Section \ref{sec:limit:zeromass}. We obtain a formula for the limiting system as a form of a Smoluchowski-Kramers equation. Finally, Section~\ref{sec:limit:whitenoise} studies the white-noise limit.

\section{Summary of Results}

\label{sec:results}
Throughout this work, we will assume that the potential $\Phi$ satisfies the following growth condition \cite{glatt2018generalized}.
\begin{assumption}\label{cond:Phi} $\Phi\in C^\infty(\rbb)$ and there exists a constant $c>0$ such that for all $x\in\rbb$
\begin{align*}
\quad c(\Phi(x)+1)\geq x^2.     
\end{align*}
By adding a positive constant if necessary, we also assume that $\Phi$ is non-negative.
\end{assumption}
A typical class of potentials $\Phi$ that satisfies Assumption~\ref{cond:Phi} is the class of polynomials of even degree whose leading coefficient is positive. Functions that grow faster than polynomials are also included, e.g. $e^{x^2}$.

We now define a phase space for the infinite-dimensional process 
\begin{align*}
X(t)=(x(t), v(t), z_1(t), z_2(t), \ldots).
\end{align*}
Following~\cite{glatt2018generalized}, let $\H_{-s}$, $s\in \rbb$ denote the Hilbert space given by
 \begin{equation}\label{eqn:H_p}
\H_{-s}=\Big\{X=(x,v,Z)=(x,v,z_1, z_2, \ldots):\|X\|^2_{-s}=x^2+v^2+\sum_{k\geq 1}k^{-2s}z_k^2<\infty\Big\}.
\end{equation}
endowed with the usual inner product $\la\cdot,\cdot\ra_{-s}$,
\begin{equation} \label{eqn:H-inner-prod}
\la X,\widetilde{X}\ra_{-s} = x\wt{x}+v\wt{v}+\sum_{k\geq 1}k^{-2s}z_k\wt{z}_k.
\end{equation}

With regards to kernel parameters $\alpha, \beta$ cf.~\eqref{c-k}, \eqref{eqn:K} and the phase space regularity parameter $s$, we assume that they satisfy the following condition.  
\begin{assumption}\label{cond:wellposed} Let $\alpha,\beta>0$ be as in~\eqref{c-k} and $s\in\rbb$ as in~\eqref{eqn:H_p}. We assume that they satisfy either the \emph{asymptotically diffusive} condition  
\begin{enumerate}
\item[\emph{(D)}]\label{cond:diffusion} $\displaystyle \alpha>1,\, \beta>\frac{1}{\alpha-1}$ and $\displaystyle 1< 2s < (\alpha-1)\beta$;
\end{enumerate}  or the \emph{asymptotically subdiffusive} condition 
\begin{enumerate}
\item[\emph{(SD)}]\label{cond:subdiffusion} $\displaystyle 0<\alpha <1,\, \beta>\frac{
1}{\alpha}$ and $\displaystyle 1< 2s < \alpha\beta$;
\end{enumerate}
or the \emph{critical regime} condition
\begin{enumerate}
\item[\emph{(C)}] $\displaystyle \alpha =1,\, \beta>1$ and $\displaystyle 1< 2s < \beta$;
\end{enumerate}
\end{assumption} 

Under Assumption~\ref{cond:Phi} and Assumption~\ref{cond:wellposed}, the well-posedness of~\eqref{eqn:GLE-Markov} was studied rigorously in~\cite{glatt2018generalized}. 

\subsection{Small-mass Limit}
 
In regards to the small-mass limit ($m\to 0$), we introduce the following limiting system whose derivation will be explained later at the end of this subsection
\begin{equation} \label{eqn:GLE-Markov:limit:zeromass}
\begin{aligned}
\gamma du(t)&= \Big(\!\!-\Phi'(u(t))-\Big(\sum_{k\geq 1}c_k\Big) u(t)-\sum_{k\geq 1}\sqrt{c_k}f_k(t)\Big)dt+\sqrt{2\gamma}dW_0(t),\\
df_k(t)&=\big(\!\!-\lambda_k f_k(t)-\lambda_k\sqrt{c_k}u(t)\big)dt+\sqrt{2\lambda_k}dW_k(t),\quad k=1,2\dots
\end{aligned}
\end{equation}
The new phase space for the solution $U(t)=(u(t),f_1(t),f_2(t),\dots)$ of~\eqref{eqn:GLE-Markov:limit:zeromass} is denoted by $\Hsdot$, $s\in \rbb$,  
 \begin{equation}\label{eqn:Hsdot}
\Hsdot=\Big\{U=(u,f_1, f_2, \ldots):\|U\|_{\Hsdot}^2=u^2+\sum_{k\geq 1}k^{-2s}f_k^2<\infty\Big\},
\end{equation}
endowed with the usual inner product. We can regard $\Hsdot$ as a subspace of $\Hs$ whose $v-$component is equal to zero. Recalling $c_k$ in~\eqref{c-k}, it is straightforward to see that if $(x,v,z_1,z_2\dots)\in\Hs$ then $(x,z_1-\sqrt{c_1}x,z_2-\sqrt{c_2}x,\dots)\in\Hsdot$.

From now on, we shall fix a stochastic basis $\mathcal{S}=\left(\Omega,\mathcal{F},\Pnone,\{\mathcal{F}_t\}_{t\geq 0},W\right)$ satisfying the usual conditions \cite{karatzas2012brownian}. Here $W$ is the cylindrical Wiener process defined on an auxiliary Wiener space $\W$ with the usual decomposition
\begin{align*}
W(t)=e^{\W}_0W_0(t)+e^{\W}_1 W_1(t)+\dots,
\end{align*}
where $\{e^{\W}_0,e^{\W}_1,\dots\}$ is the canonical basis of $\W$, and $\{W_k(t)\}_{k\geq 0}$ are independent one-dimensional Brownian Motions  \cite{da2014stochastic}. The well-posedness of~\eqref{eqn:GLE-Markov:limit:zeromass} is guaranteed by the following result.
\begin{proposition} \label{prop:wellposed:limit:zeromass} Suppose that $\Phi$ satisfies Assumption \ref{cond:Phi} and the constants $\alpha, \beta, s$ satisfy Assumption \ref{cond:wellposed}. Then for all initial conditions $U_0\in \Hsdot$, there exists a unique pathwise solution $U(\cdot,U_0):\Omega\times[0,\infty)\to\Hsdot$ of~\eqref{eqn:GLE-Markov:limit:zeromass} in the following sense: $U(\cdot,U_0)$ is $\mathcal{F}_t$-adapted, $U(\cdot,U_0)\in C([0,\infty),\Hsdot)$ almost surely and that if $\wt{U}(\cdot,U_0)$ is another solution then for every $T\geq 0$,
\begin{displaymath}
\P{\forall t\in[0,T], U(t,U_0)=\wt{U}(t,U_0)}=1.
\end{displaymath}
Moreover, for every $U_0 \in \Hsdot$ and $T\geq 0$, there exists a constant $C(T,U_0)>0$ such that
 \begin{equation} \label{ineq:strong-sol'n}
 \Enone\sup_{0\leq t\leq T }\|U(t)\|_{\Hsdot}^2\leq C(T,U_0).
 \end{equation}
\end{proposition}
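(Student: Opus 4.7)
The plan is to adapt the Galerkin framework used for the original system~\eqref{eqn:GLE-Markov} in~\cite{glatt2018generalized} to the transformed variables $f_k = z_k - \sqrt{c_k} u$. For each $N \geq 1$ I would introduce the truncated process $U^{(N)}(t) = (u^{(N)}(t), f_1^{(N)}(t), \ldots, f_N^{(N)}(t), 0, 0, \ldots)$ solving the finite-dimensional SDE obtained from~\eqref{eqn:GLE-Markov:limit:zeromass} by replacing every infinite sum $\sum_{k \geq 1}$ by $\sum_{k=1}^{N}$. Since $\Phi \in C^\infty$ is locally Lipschitz and the remaining drift and diffusion terms are linear, standard finite-dimensional SDE theory furnishes a unique local strong solution $U^{(N)}$; global existence will follow once the Lyapunov estimate below is in place.

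For the a priori bound I would employ the candidate Lyapunov function
\begin{equation*}
V(U) := \Phi(u) + \tfrac{1}{2}\sum_{k\geq 1} k^{-2s} f_k^2,
\end{equation*}
which dominates $\|U\|^2_{\Hsdot}$ up to an additive constant thanks to Assumption~\ref{cond:Phi}. Itô's formula produces the negative contributions $-\gamma^{-1}(\Phi'(u))^2$ and $-\sum_k k^{-2s}\lambda_k f_k^2$, the trace term $\gamma^{-1}\Phi''(u)+\sum_k k^{-2s}\lambda_k$, and two cross terms $-\gamma^{-1}\Phi'(u)\sum_k\sqrt{c_k}f_k$ and $-\sum_k k^{-2s}\lambda_k\sqrt{c_k}\,u\,f_k$. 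I would control the cross terms via Young's inequality together with Cauchy-Schwarz against the weighted norm, which requires finiteness of $\sum_k k^{2s}c_k = \sum_k k^{2s-1-\alpha\beta}$ and $\sum_k k^{-2s}\lambda_k c_k$. Assumption~\ref{cond:wellposed} is tailored so that these and all related weighted tails are summable in each regime (D), (SD), (C); in particular $2s < \alpha\beta$ in every case, and $2s > 1$ secures the stochastic trace $\sum_k k^{-2s-\beta}$. This yields a differential inequality $\mathcal{L}V(U) \leq C(1 + V(U))$ with $C$ independent of $N$, and Gronwall's lemma gives the uniform estimate $\mathbb{E}\sup_{0\leq t\leq T} V(U^{(N)}(t)) \leq C(T,U_0)$, which simultaneously supplies global existence of each $U^{(N)}$ and the target bound~\eqref{ineq:strong-sol'n}.

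To pass to the limit I would show that $\{U^{(N)}\}$ is Cauchy in $L^2(\Omega;C([0,T];\Hsdot))$: subtracting the equations at levels $N>M$ and applying Itô's formula to $\|U^{(N)}-U^{(M)}\|^2_{\Hsdot}$, after a stopping-time localization $\tau_R = \inf\{t : \|U^{(N)}(t)\|_{\Hsdot} + \|U^{(M)}(t)\|_{\Hsdot} \geq R\}$ used to tame the non-Lipschitz behaviour of $\Phi'$, one obtains a Gronwall-type inequality whose right-hand side vanishes as $N,M\to\infty$ by the uniform bound of the previous paragraph. The resulting limit $U$ solves~\eqref{eqn:GLE-Markov:limit:zeromass} pathwise, lies almost surely in $C([0,\infty),\Hsdot)$, and inherits~\eqref{ineq:strong-sol'n} via Fatou; pathwise uniqueness follows from the same difference-energy calculation applied to two candidate solutions starting from the same datum. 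The main obstacle is the Lyapunov computation of the second paragraph: one must carefully bookkeep which weighted tail series appear after each use of Young's inequality and verify that Assumption~\ref{cond:wellposed} is exactly what forces all of them to converge — the interplay between the lower bound $2s > 1$ (needed for the trace $\sum k^{-2s}\lambda_k$) and the regime-specific upper bound on $2s$ (needed for the coupling $\sum k^{2s} c_k$) is precisely what dictates the admissible range of the phase-space parameter $s$.
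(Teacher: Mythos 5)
Your plan is sound but follows a genuinely different route from the paper. The paper does not use a Galerkin/finite-dimensional truncation at all: it keeps the full infinite-dimensional system and instead truncates the nonlinearity, replacing $\Phi'(u)$ by $\Phi'(u)\theta^R(u)$ with the cutoff $\theta^R$ of~\eqref{defn:theta-R}. Because the cutoff drift is then globally Lipschitz on all of $\Hsdot$ and the noise is additive, global existence and uniqueness for the cutoff equation~\eqref{eqn:GLE-Markov:limit:zeromass:cutoff} follow immediately from a Banach fixed-point argument, with no Cauchy-in-$N$ step; the cutoff solution agrees with the true solution up to the exit time $\tau_R$ of the ball of radius $R$, and the Lyapunov bound (the paper uses $\Psi(U)=\gamma^{-1}\bigl(\Phi(u)+(\sum_k c_k)u^2/2\bigr)+\tfrac12\sum_k k^{-2s}f_k^2$, whereas you drop the quadratic-in-$u$ piece, which is harmless since the resulting cross term $\Phi'(u)\,u$ is absorbed via Young together with $u^2\lesssim\Phi(u)+1$ from Assumption~\ref{cond:Phi}) gives $\tau_\infty=\infty$ a.s.\ plus~\eqref{ineq:strong-sol'n} via Fatou. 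What the cutoff approach buys is precisely the step you flag as the ``main obstacle'': it sidesteps the Cauchy argument entirely. If you keep the Galerkin route, note one gap in your limit-passage sketch: when you apply It\^o to $\|U^{(N)}-U^{(M)}\|^2_{\Hsdot}$, the components $M<k\le N$ contribute $\sum_{M<k\le N}k^{-2s}\bigl(f_k^{(N)}\bigr)^2$, and your uniform bound $\mathbb{E}\sup_t\|U^{(N)}(t)\|^2_{\Hsdot}\le C$ does \emph{not} by itself force these tails to vanish uniformly in $N$ as $M\to\infty$. You would need a slightly stronger a priori estimate, e.g.\ in $\dot{\mathcal H}_{-s'}$ for some $1<2s'<2s$ (still admissible under Assumption~\ref{cond:wellposed}), so that $\sum_{k>M}k^{-2s}\bigl(f_k^{(N)}\bigr)^2\le M^{2(s'-s)}\|U^{(N)}\|^2_{\dot{\mathcal H}_{-s'}}$. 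With that amendment the Galerkin program goes through, but it is noticeably heavier than the paper's cutoff scheme.
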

The proof of Proposition~\ref{prop:wellposed:limit:zeromass} is quite standard, similar to that of Proposition 6 in~\cite{glatt2018generalized} and will be briefly explained in Section~\ref{sec:limit:zeromass}. We now state the main result concerning the small-mass limit. 
\begin{theorem} \label{thm:limit:zeromass}
Suppose that $\Phi$ satisfies Assumption~\ref{cond:Phi} and the constants $\alpha, \beta, s$ satisfy Assumption~\ref{cond:wellposed}. Let $X_m(t)=(x_m(t),v_m(t),z_{1,m}(t),\dots)$ solve~\eqref{eqn:GLE-Markov} with initial conditions 
\begin{align*}
(x_m(0),v_m(0),z_{1,m}(0),z_{2,m}(0)\dots)=(x,v,z_1,z_2,\dots)\in\Hs,
\end{align*}
and $U(t)=(u(t),f_1(t),\dots)$ solve~\eqref{eqn:GLE-Markov:limit:zeromass} with initial conditions 
\begin{align*}
(u(0),f_1(0),f_2(0)\dots)=(x,z_1-\sqrt{c_1}x,z_2-\sqrt{c_2}x,\dots)\in\Hsdot.
\end{align*}
Then, for every $T,\,\xi>0$, it holds that
\begin{align*}
\Pnone\Big\{\sup_{0\leq t\leq T}|x_m(t)-u(t)|>\xi\Big\}\rightarrow 0,\quad m\rightarrow 0.
\end{align*}
\end{theorem}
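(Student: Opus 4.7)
The plan is to follow the localization/Lyapunov strategy of Herzog--Mattingly~\cite{herzog2016small}, adapted to the infinite-dimensional phase space, reducing the problem to an integrated difference equation for $D_m(t):=x_m(t)-u(t)$. First, I would introduce the shifted variables $f_{k,m}(t):=z_{k,m}(t)-\sqrt{c_k}\,x_m(t)$; using $dx_m=v_m\,dt$, a direct computation shows that $f_{k,m}$ obeys the \emph{same} linear equation as the limiting $f_k$ in~\eqref{eqn:GLE-Markov:limit:zeromass} with $u$ replaced by $x_m$. Moreover, integrating the $v_m$-equation from $0$ to $t$ and using $\int_0^t v_m=x_m(t)-x$ together with $z_{k,m}=f_{k,m}+\sqrt{c_k}x_m$ gives
\begin{equation*}
\gamma\,x_m(t)=\gamma x-m\bigl(v_m(t)-v\bigr)-\int_0^t\Phi'(x_m(s))\,ds-\Bigl(\sum_k c_k\Bigr)\!\int_0^t\! x_m(s)\,ds-\sum_k\sqrt{c_k}\!\int_0^t\! f_{k,m}(s)\,ds+\sqrt{2\gamma}\,W_0(t),
\end{equation*}
and the analogous identity with $m=0$ holds for $u$. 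Since the shifted initial data satisfies $f_{k,m}(0)=f_k(0)$, a Duhamel representation for $g_k:=f_{k,m}-f_k$ (whose Brownian terms cancel) combined with Fubini collapses $\sum_k\sqrt{c_k}\int_0^t g_k$ into an integral of $D_m$ against the kernel $K(t-\cdot)$, yielding after subtraction the clean integral equation
\begin{equation*}
\gamma D_m(t)+\int_0^t K(t-r)D_m(r)\,dr=-m\bigl(v_m(t)-v\bigr)-\int_0^t\bigl[\Phi'(x_m(s))-\Phi'(u(s))\bigr]ds.
\end{equation*}

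Next, since $\Phi'$ is only locally Lipschitz, I would localize via the stopping time $\tau_R^m:=\inf\{t\geq 0:\|X_m(t)\|_{-s}\vee\|U(t)\|_{\Hsdot}\geq R\}$ and work on $\{\tau_R^m>T\}$, where $\Phi'$ is globally $L_R$-Lipschitz. Because $K(t-r)$ is bounded by $K(0)=\sum_k c_k<\infty$ (finite in all three regimes of Assumption~\ref{cond:wellposed} since $\alpha\beta>1$), a standard Gronwall argument applied to the integral equation above yields, on $\{\tau_R^m>T\}$,
\begin{equation*}
\sup_{0\leq t\leq T}|D_m(t)|\leq C(R,T)\sup_{0\leq t\leq T}m\,|v_m(t)-v|.
\end{equation*}
The whole problem is thus reduced to showing (i) $\sup_{[0,T]}m\,|v_m(t)|\to 0$ in probability as $m\downarrow 0$, and (ii) $\P{\tau_R^m\leq T}\to 0$ uniformly in $m\in(0,1]$ as $R\to\infty$.

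Both (i) and (ii) are meant to follow from a uniform-in-$m$ energy estimate of the type used in~\cite{glatt2018generalized}: for a weighted Lyapunov functional $\mathcal{E}_m(X)=\tfrac{m}{2}v^2+\Phi(x)+\tfrac{1}{2}\sum_k k^{-2s}(z_k-\sqrt{c_k}x)^2$, one aims to prove $\E{\sup_{0\leq t\leq T}\mathcal{E}_m(X_m(t))}\leq C(T,X_0)$ with $C$ independent of $m\in(0,1]$. This gives $\E{\sup_{[0,T]}m^2 v_m^2}\leq m\,C(T,X_0)\to 0$ for (i), while (ii) follows from Markov's inequality. Combining the Gronwall estimate with these bounds and letting first $m\to 0$ and then $R\to\infty$ completes the argument. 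The main obstacle is exactly this uniform-in-$m$ Lyapunov control: the kinetic term $\tfrac{m}{2}v^2$ has a vanishing coefficient, so the dissipation inequality must be produced with constants that do \emph{not} blow up as $m\downarrow 0$, while simultaneously accommodating the infinite $z_k$-tower and the cross terms generated by the shift $f_{k,m}=z_{k,m}-\sqrt{c_k}x_m$; this is precisely where the summability encoded in Assumption~\ref{cond:wellposed} becomes essential, in analogy with its role for the well-posedness result of~\cite{glatt2018generalized}.
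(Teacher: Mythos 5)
Your reduction to the integral equation
\begin{equation*}
\gamma D_m(t)+\int_0^t K(t-r)D_m(r)\,dr=-m\bigl(v_m(t)-v\bigr)-\int_0^t\bigl[\Phi'(x_m(s))-\Phi'(u(s))\bigr]\,ds
\end{equation*}
is correct and mirrors the paper's Proposition~\ref{prop:limit:zeromass:lipschitz}, and the Gronwall step on the localized event is sound. The gap lies entirely in your items (i)--(ii), and in particular in the uniform-in-$m$ Lyapunov bound $\Enone\sup_{[0,T]}\mathcal{E}_m(X_m)\leq C(T,X_0)$ that both items are supposed to follow from.

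That bound is not true, and the obstruction is exactly the one you flag. Heuristically $v_m$ behaves like an Ornstein--Uhlenbeck process with stationary variance of order $1/m$ and relaxation rate $\gamma/m$, so $\sup_{[0,T]}v_m^2$ is of order $m^{-1}\log(1/m)$, and even the kinetic part $\frac{m}{2}\sup_{[0,T]}v_m^2$ diverges as $m\downarrow 0$. Technically, It\^{o}'s formula for $\frac{m}{2}v_m^2$ produces the correction term $\frac{\gamma}{m}\,dt$ and a martingale $v_m\sqrt{2\gamma}\,dW_0$ whose bracket is $\int_0^T 2\gamma v_m^2\,ds$, and neither can be absorbed uniformly in $m$; the paper's own Proposition~\ref{prop:limit:zeromass:lipschitz:1} only yields $m^{2q}\Enone\sup|v_m|^{2q}\leq c(m^{2q}+m^{q-1})$, which at $q=1$ is merely bounded, not small. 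Worse, your stopping time $\tau_R^m$ tests $\|X_m\|_{-s}$, which contains $v_m^2$ (not $m v_m^2$), so $\Pnone\{\tau_R^m\leq T\}\to 1$ as $m\to 0$ for any fixed $R$; item (ii) fails outright.

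The paper circumvents this precisely by \emph{not} trying to control any exit time of the $m$-dependent process. It localizes only through $\sigma_m^R=\inf\{t:|x_m(t)|\geq R\}$ (no $v_m$), and then applies the Herzog--Mattingly event-implication trick: on the event $\{\sup_{[0,T]}|x^R-u^R|\leq\xi\}\cap\{\sigma_m^R<T\leq\sigma^R\}$, the closeness $|x^R-u^R|\leq\xi$ forces $\sup_{[0,T]}|u|>R-1$, hence $\sigma^{R-1}<T$. This reduces $\Pnone\{\sigma_m^R<T\}$ to exit probabilities of the $m$-independent limit process $U$, which \emph{are} controlled by the standard Lyapunov bound of Proposition~\ref{prop:wellposed:limit:zeromass}. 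Item (i) is then obtained not from a uniform Lyapunov bound but from the sharp noise estimate of Lemma~\ref{lem:limit:zeromass} with $q>1$ (Proposition~\ref{prop:limit:zeromass:lipschitz:1}). To fix your proof you would either have to reproduce this implication trick, or replace $\tau_R^m$ by a stopping time that involves only $x_m$ and $U$ and establish the exit-time estimate through the $m$-independent process.
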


It is worthwhile to note that the small-mass limit in Theorem~\ref{thm:limit:zeromass} holds for all regimes ($\alpha>0$) as stated in Assumption~\ref{cond:wellposed}. We will see later that for the white-noise limit, the result is limited to the diffusive regime, namely $\alpha>1$, see Theorem~\ref{thm:limit:whitenoise:probconverge} below. We finish this subsection by a heuristic argument explaining how we derive the limiting system~\eqref{eqn:GLE-Markov:limit:zeromass} and its initial conditions as stated in Theorem~\ref{thm:limit:zeromass}. Following~\cite{herzog2016small}, to determine the limiting system, one may formally set $m=0$ on the RHS of the second equation in \eqref{eqn:GLE-Markov} and substitute $v(t)$ by $dx(t)$ from the first equation to obtain
\begin{align*}
\gamma\, dx(t)=\big[\!\!-\Phi'(x(t))-\sum_{k\geq 1}\sqrt{c_k}z_k(t)\big]dt+\sqrt{2\gamma}dW_0(t).
\end{align*}
The equation on $z_k(t)$ in~\eqref{eqn:GLE-Markov} still depends on $v(t)$, but this can be circumvented by using Duhamel's formula,
\begin{align*}
z_k(t)=e^{-\lambda_k t}z_k(0)+\sqrt{c_k}\int_0^t e^{-\lambda_k(t-r)}v(r)dr+\sqrt{2\lambda_k}\int_0^t e^{-\lambda_k(t-r)}dW_k(r).
\end{align*}
By an integration by parts, we can transform the integral term involving $v(r)$ to
\begin{align*}
\int_0^t e^{-\lambda_k(t-r)}v(r)dr = x(t)-e^{-\lambda_k t}x(0)-\lambda_k\int_0^t e^{-\lambda_k(t-r)}x(r)dr.
\end{align*}
Plugging back into the formula for $z_k(t)$, we find
\begin{align*}
z_k(t)&=e^{-\lambda_k t}(z_k(0)-\sqrt{c_k}x(0))+\sqrt{c_k}x(t)-\lambda_k\sqrt{c_k}\int_0^t e^{-\lambda_k(t-r)}x(r)dr\\
&\qquad\qquad\qquad+\sqrt{2\lambda_k}\int_0^t e^{-\lambda_k(t-r)}dW_k(r).
\end{align*}
We now set $f_k(t):=z_k(t)-\sqrt{c_k}x(t)$ and $u(t):=x(t)$ and thus arrive at~\eqref{eqn:GLE-Markov:limit:zeromass} with the new shifted initial conditions as in Theorem~\ref{thm:limit:zeromass}.

\subsection{White-noise Limit}
Next, we present our results on the asymptotical behavior of the $(x(t),v(t))$-component of~\eqref{eqn:GLE-Markov} in the diffusive regime by an appropriate scaling on the memory kernel $K(t)$ in~\eqref{eqn:K}. For $\epsilon>0$, we introduce $K_\epsilon(t)$ given by
\begin{equation}\label{eqn:K:epsilon}
K_\epsilon(t)=\frac{1}{\epsilon}K\Big(\frac{t}{\epsilon}\Big)=\sum_{k\geq 1}\frac{c_k}{\epsilon}e^{-\frac{\lambda_k}{\epsilon}|t|}.
\end{equation}
With $K_\epsilon$ defined above, the corresponding system~\eqref{eqn:GLE-Markov} becomes
\begin{equation}\label{eqn:GLE-Markov:whitenoise:epsilon}
\begin{aligned}
d x_\epsilon(t) &= v_\epsilon(t)\, d t, \\
m\, d v_\epsilon(t)&=\Big(\!\!-\gamma v_\epsilon(t)-\Phi'(x_\epsilon(t))-\sum_{k\geq 1} \sqrt{\frac{c_k}{\epsilon}} z_{k,\epsilon}(t)\Big)\,dt+\sqrt{2\gamma}\, dW_0(t), \\
d z_{k,\epsilon}(t)&=\Big(\!\!-\frac{\lambda_k}{\epsilon} z_{k,\epsilon}(t)+ \sqrt{\frac{c_k}{\epsilon}}v_\epsilon(t)\Big) \, dt+\sqrt{\frac{2\lambda_k}{\epsilon}}\, dW_k(t),\qquad k\geq 1.
\end{aligned}	
\end{equation}
Inspired by~\cite{ottobre2011asymptotic}, we consider the following system
\begin{equation}\label{eqn:GLE-Markov:limit:whitenoise}
\begin{aligned}
d u(t) &= p(t)\, d t ,\\
m\, d p(t)&=\Big(\!\!\!-\Big(\gamma+\sum_{k\geq 1}\frac{c_k}{\lambda_k}\Big) p(t)-\Phi'(u(t))\Big)\,dt\\
&\qquad\qquad-\sum_{k\geq 1}\sqrt{\frac{2c_k}{\lambda_k}}dW_k(t)+\sqrt{2\gamma}\, dW_0(t).
\end{aligned}	
\end{equation}
The well-posedness of~\eqref{eqn:GLE-Markov:limit:whitenoise} will be addressed briefly in Section~\ref{sec:limit:whitenoise}. 
We then assert that $(x_\epsilon(t),v_\epsilon(t))$ converges to the solution $(u(t),p(t))$ of~\eqref{eqn:GLE-Markov:limit:whitenoise} in the following sense.

\begin{theorem} \label{thm:limit:whitenoise:probconverge}
Suppose that $\Phi$ satisfies Assumption \ref{cond:Phi} and the constants $\alpha, \beta, s$ satisfy Condition (D) of Assumption \ref{cond:wellposed}. Let $X_\epsilon(t)=(x_\epsilon(t),v_\epsilon(t),z_{1,\epsilon}(t),\dots)$ be the solution of~\eqref{eqn:GLE-Markov:whitenoise:epsilon} with initial conditions 
\begin{align*}(x_\epsilon(0),v_\epsilon(0),z_{1,\epsilon}(0),z_{2,\epsilon}(0),\dots)=(x,v,z_{1},z_2,\dots)\in\Hs,
\end{align*} 
and $(u(t),p(t))$ be the solution of~\eqref{eqn:GLE-Markov:limit:whitenoise} with initial conditions $(u(0),p(0))=(x,v)$. Then, for every $T,\,\xi>0$,
\begin{align*}
\Pnone\Big\{\sup_{0\leq t\leq T}|x_\epsilon(t)-u(t)|+|v_\epsilon(t)-p(t)|>\xi\Big\}\rightarrow 0,\quad \epsilon\rightarrow 0.
\end{align*}
\end{theorem}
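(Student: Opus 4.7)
The strategy is to parallel the integration-by-parts and energy argument used in the proof of Theorem~\ref{thm:limit:zeromass}. The key identity, obtained by multiplying the third line of~\eqref{eqn:GLE-Markov:whitenoise:epsilon} by $\sqrt{c_k\epsilon}/\lambda_k$ and integrating in time, is
\begin{align*}
\int_0^t\sqrt{\tfrac{c_k}{\epsilon}}\,z_{k,\epsilon}(s)\,ds=\tfrac{c_k}{\lambda_k}\int_0^t v_\epsilon(s)\,ds+\sqrt{\tfrac{2c_k}{\lambda_k}}\,W_k(t)-\tfrac{\sqrt{c_k\epsilon}}{\lambda_k}\bigl(z_{k,\epsilon}(t)-z_{k,\epsilon}(0)\bigr).
\end{align*}
Summing over $k\geq 1$ reproduces, on the $v_\epsilon$-equation, exactly the enhanced drag $(\gamma+\sum_k c_k/\lambda_k)v_\epsilon$ and the white-noise contribution $\sum_k\sqrt{2c_k/\lambda_k}\,dW_k$ of the limiting system~\eqref{eqn:GLE-Markov:limit:whitenoise}, modulo a remainder of order $\sqrt{\epsilon}\sum_k(\sqrt{c_k}/\lambda_k)z_{k,\epsilon}(t)$. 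Substituting into~\eqref{eqn:GLE-Markov:whitenoise:epsilon} and subtracting~\eqref{eqn:GLE-Markov:limit:whitenoise} then yields an integrated equation for the differences $\xi_\epsilon:=x_\epsilon-u$ and $\eta_\epsilon:=v_\epsilon-p$ whose only forcing is this $\sqrt{\epsilon}$-remainder.

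With this reduction in place, I would apply It\^o's formula to the energy $E_\epsilon(t):=\xi_\epsilon(t)^2+m\,\eta_\epsilon(t)^2$. Dissipation from the effective drag, together with Young's inequality, absorbs the cross term $2\xi_\epsilon\eta_\epsilon$ and a locally Lipschitz bound on $\Phi'(x_\epsilon)-\Phi'(u)$, so Gronwall's inequality reduces the proof to showing that $\sup_{t\leq T}|\sqrt{\epsilon}\sum_k(\sqrt{c_k}/\lambda_k)z_{k,\epsilon}(t)|$ tends to zero in probability. The non-Lipschitz $\Phi'$ is controlled by localization via the stopping times $\tau_{R,\epsilon}:=\inf\{t\geq 0:\|X_\epsilon(t)\|_{-s}+\|(u(t),p(t))\|\geq R\}$; using a uniform-in-$\epsilon$ moment bound on $\|X_\epsilon\|_{-s}$ derived from an $\Hs$ Lyapunov estimate in the spirit of~\eqref{ineq:strong-sol'n}, one makes $\Pnone\{\tau_{R,\epsilon}\leq T\}$ arbitrarily small by choosing $R$ large. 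On $\{t\leq\tau_{R,\epsilon}\}$ the potential difference is Lipschitz with $R$-dependent constant, and the Gronwall loop closes.

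The principal obstacle is infinite-dimensional: the $\sqrt{\epsilon}$-remainder involves the series $\sum_k(\sqrt{c_k}/\lambda_k)|z_{k,\epsilon}|$, whose uniform-in-$\epsilon$ control requires extracting summability from the $\Hs$ norm. A weighted Cauchy-Schwarz gives the localized bound $\sqrt{\epsilon}\,\|X_\epsilon\|_{-s}(\sum_k k^{2s}c_k/\lambda_k^2)^{1/2}$, and verifying the convergence of the scalar weight series under Condition~(D) --- or, failing that, bounding the remainder in mean square directly from the OU-type SDE for each $z_{k,\epsilon}$ and exploiting the $\sqrt{\epsilon}$ prefactor together with the independence of the driving Brownian motions --- is the technical crux that distinguishes this proof from the finite-dimensional analogue of~\cite{ottobre2011asymptotic}. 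The martingale $\sum_k\sqrt{2c_k/\lambda_k}W_k$ is comparatively easier, its $L^2$ intensity demanding only $\sum_k c_k/\lambda_k<\infty$, which is immediate from $\alpha>1$. Once both remainders are shown to vanish, sending $\epsilon\downarrow 0$ on the localized event and then $R\uparrow\infty$ completes the argument.
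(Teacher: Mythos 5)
Your algebraic identity is exactly the one the paper uses (it is equation~\eqref{eqn:limit:whitenoise:lipschitz:1}, obtained by integrating the $z_k$-equation in time and re-summing so that the noise terms cancel), and the outline of ``reduce to a Lipschitz cut-off, then localize'' is also the right skeleton. But there is a genuine gap in the localization step that the paper's proof is specifically engineered to avoid.

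You define $\tau_{R,\epsilon}:=\inf\{t:\|X_\epsilon(t)\|_{-s}+\|(u(t),p(t))\|\geq R\}$ and claim that a uniform-in-$\epsilon$ moment bound on $\sup_{t\leq T}\|X_\epsilon(t)\|_{-s}$ --- ``in the spirit of~\eqref{ineq:strong-sol'n}'' --- makes $\Pnone\{\tau_{R,\epsilon}\leq T\}$ small for large $R$. No such bound is available under the stated hypotheses. Each $z_{k,\epsilon}$ is driven by $\sqrt{2\lambda_k/\epsilon}\,dW_k$ with mean reversion $\lambda_k/\epsilon$; the stationary variance is $O(1)$ but the correlation time is $O(\epsilon)$, so $\Enone\sup_{0\leq t\leq T}z_{k,\epsilon}(t)^2\sim\log(1/\epsilon)\to\infty$. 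Consequently, $\Enone\sup_{t\leq T}\|X_\epsilon(t)\|_{-s}^2$ blows up as $\epsilon\downarrow 0$, and you cannot control $\Pnone\{\tau_{R,\epsilon}\leq T\}$ uniformly by Markov's inequality. (The paper's Proposition~\ref{prop:limit:whitenoise:L2bound} establishes a uniform bound only on the $(x,v)$-components, and only under the extra assumptions of Theorem~\ref{thm:limit:whitenoise:L1converge}, which are not in force here.) Relatedly, your weighted Cauchy--Schwarz estimate $\sqrt{\epsilon}\,\|X_\epsilon\|_{-s}(\sum_k k^{2s}c_k/\lambda_k^2)^{1/2}$ fails under Condition (D): the exponent is $k^{-(1+(\alpha-2)\beta-2s)}$, and since $2s<(\alpha-1)\beta$, for $1<\alpha<2$ one has $(\alpha-2)\beta<0<2s$, so the series diverges.

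The missing idea is the chain-of-event-implications trick used in the proof of Theorem~\ref{thm:limit:zeromass}, which the paper reuses verbatim for Theorem~\ref{thm:limit:whitenoise:probconverge}. One cuts off only the potential $\Phi'$ (so the stopping times $\sigma^R$ and $\sigma^R_\epsilon$ involve only $|u(t)|$ and $|x_\epsilon(t)|$, not the full phase-space norm), and then observes that on the event $\{\sup_{t\leq T}|x^R-u^R|\leq\xi,\ \sigma^R_\epsilon<T\leq\sigma^R\}$ one has $\sup_{t\leq T}|u(t)|>R-1$, hence $\{\sigma^{R-1}<T\}$. This bounds $\Pnone\{\sigma^R\wedge\sigma^R_\epsilon<T\}$ in terms of $\Pnone\{\sigma^{R-1}<T\}$ and $\Pnone\{\sigma^R<T\}$, which involve only the \emph{limiting} process $u$ and are therefore $\epsilon$-independent; they are controlled by the well-posedness bound~\eqref{ineq:whitenoise:limit:bound}. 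This removes any need for uniform-in-$\epsilon$ moment bounds on $\|X_\epsilon\|_{-s}$. The remaining ingredient, that the $\sqrt{\epsilon}$-remainder vanishes, is what Proposition~\ref{prop:limit:whitenoise:lipschitz:1} proves --- not by a single Cauchy--Schwarz on $z_k$, but by substituting the Duhamel representation of $z_{k,\epsilon}$ and estimating the initial-data, $v$-integral, and stochastic-convolution pieces separately (using $1-e^{-x}\leq c(q)x^q$ to trade $\epsilon$-powers against summability in $k$), which is essentially your fallback suggestion. You should replace the $\tau_{R,\epsilon}$-based localization with this construction to close the argument.
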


The reader may wonder why the convergence result of Theorem~\ref{thm:limit:whitenoise:probconverge} is restricted to the \emph{diffusive} regime, namely $\alpha>1$ according to Condition (D) of Assumption~\ref{cond:wellposed}. Heuristically, since the memory kernel $K(t)$ decays like $t^{-\alpha}$ as $t\to\infty$, we see that
\begin{align*}
K_\epsilon(t)=\frac{1}{\epsilon}K\Big(\frac{t}{\epsilon}\Big)\sim \epsilon^{\alpha-1} t^{-\alpha},\quad t\to\infty.
\end{align*}
By shrinking $\epsilon$ further to zero, if $\alpha>1$, $K_\epsilon(t)$ does not behave ``badly" at infinity. In fact, it is not difficult to show that as $\epsilon\downarrow 0$, $K_\epsilon$ converges to the Dirac function $\delta_0$ centered at the origin, in the sense of tempered distribution, namely, for every $\f\in\Sc$, the Schwartz space on $\rbb$, it holds that
\begin{align*}
\int_\rbb K_\epsilon(t)\f(t)dt\to |K|_{L^1(\rbb)}\f(0).
\end{align*}
which implies that the random force $F(t)$ in~\eqref{eqn:GLE} converges to a white noise process in the sense of random distribution,   cf. \cite{ito1954stationary}, hence the so called ``white-noise limit". We will see later in the proof of Proposition~\ref{prop:limit:whitenoise:L2bound} that the condition $\alpha>1$ is crucial for our analysis in order to obtain bounds on the solutions.

Finally, if $\Phi$ and parameters $\alpha,\, \beta$ satisfy stronger assumptions, then we are able to obtain better convergence than the result in Theorem~\ref{thm:limit:whitenoise:probconverge}. To be precise, we assume the following condition on $\Phi$.
\begin{assumption} \label{cond:Phi:whitenoise} There exist constants $n,\, c>0$ such that for every $x,\, y\in\rbb$,
\begin{displaymath}
\Phi'(x)y\leq c(\Phi(x)+|y|^n+1).
\end{displaymath}
\end{assumption}
The assumption above is again a requirement about the growth of $\Phi'$ that guarantees a universal bound independent of $\epsilon$ on the solution $(x_\epsilon(t),v_\epsilon(t))$ of~\eqref{eqn:GLE-Markov:whitenoise:epsilon}, cf. Proposition~\ref{prop:limit:whitenoise:L2bound}. We remark that a function $\Phi$ satisfying Assumption~\ref{cond:Phi:whitenoise} need not satisfy Assumption~\ref{cond:Phi}, taking $\Phi$ a constant for example. It is also worthwhile to note that the class of polynomials of even degree satisfies Assumption~\ref{cond:Phi:whitenoise}. However, functions growing exponentially fast, e.g. $e^{x^2}$, do not. 

With regards to space regularities, we assume the following condition about parameters $\alpha,\,\beta$.
\begin{assumption} \label{cond:whitenoise:L1converge}  Let $\alpha,\beta>0$ be as in~\eqref{c-k}. We assume that they satisfy 
\begin{align*}
\alpha>2,\quad\text{and}\quad (\alpha-2)\beta>1.
\end{align*}
\end{assumption}

We then have the following important result.
\begin{theorem} \label{thm:limit:whitenoise:L1converge}
Suppose that $\Phi$ satisfies Assumption \ref{cond:Phi} and Assumption~\ref{cond:Phi:whitenoise} and that the constants $\alpha, \beta, s$ satisfy Condition (D) of Assumption \ref{cond:wellposed} and Assumption~\ref{cond:whitenoise:L1converge}. Let $X_\epsilon(t)=(x_\epsilon(t),v_\epsilon(t),z_{1,\epsilon}(t),\dots)$ be the solution of~\eqref{eqn:GLE-Markov:whitenoise:epsilon} with initial conditions 
\begin{align*}(x_\epsilon(0),v_\epsilon(0),z_{1,\epsilon}(0),z_{2,\epsilon}(0),\dots)=(x,v,z_{1},z_2,\dots)\in\Hs,
\end{align*}
 and $(u(t),p(t))$ be the solution of~\eqref{eqn:GLE-Markov:limit:whitenoise} with initial conditions $(u(0),p(0))=(x,v)$. Then, for every $T>0$, $1\leq q<2$, it holds that
\begin{equation*}
\Enone\Big[\sup_{0\leq t\leq T}|x_\epsilon(t)-u(t)|^q+|v_\epsilon(t)-p(t)|^q\Big]\rightarrow 0, \quad \epsilon\to 0.
\end{equation*}
\end{theorem}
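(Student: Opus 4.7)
The plan is to leverage the convergence in probability already established in Theorem~\ref{thm:limit:whitenoise:probconverge} (whose hypotheses are contained in those of the present theorem) together with a uniform-in-$\epsilon$ second moment bound, and then conclude via the Vitali convergence theorem. Writing
\[
R_\epsilon:=\sup_{0\le t\le T}\bigl(|x_\epsilon(t)-u(t)|^q+|v_\epsilon(t)-p(t)|^q\bigr),
\]
the probability convergence yields $R_\epsilon\to 0$ in probability as $\epsilon\to 0$ (since $q\ge 1$). It thus remains to show that $\{R_\epsilon\}_{\epsilon>0}$ is uniformly integrable, which I will do by establishing a bound in $L^{2/q}$ uniform in $\epsilon$.

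First, I invoke Proposition~\ref{prop:limit:whitenoise:L2bound} to obtain the universal estimate
\[
\sup_{\epsilon>0}\,\mathbb{E}\sup_{0\le t\le T}\bigl(x_\epsilon(t)^2+v_\epsilon(t)^2\bigr)\le C(T,X_0)<\infty.
\]
In parallel, for the limit system~\eqref{eqn:GLE-Markov:limit:whitenoise} a standard Lyapunov argument based on $\tfrac{m}{2}p^2+\Phi(u)$, together with the Burkholder--Davis--Gundy inequality and Assumption~\ref{cond:Phi}, furnishes $\mathbb{E}\sup_{0\le t\le T}\bigl(u(t)^2+p(t)^2\bigr)<\infty$. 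Combining the two bounds through the elementary inequalities $|a-b|^q\le 2^q(|a|^q+|b|^q)$ and $x^q\le 1+x^2$ (valid for $1\le q<2$ and $x\ge 0$) then yields
\[
\sup_{\epsilon>0}\mathbb{E}[R_\epsilon^{2/q}]<\infty.
\]

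Since $2/q>1$, the de la Vall\'ee-Poussin criterion gives uniform integrability of $\{R_\epsilon\}_{\epsilon>0}$; coupled with the probability convergence $R_\epsilon\to 0$, Vitali's theorem produces $\mathbb{E}[R_\epsilon]\to 0$, which is the claim of the theorem. The genuine difficulty therefore resides entirely in Proposition~\ref{prop:limit:whitenoise:L2bound}: the naive energy $\tfrac{m}{2}v^2+\Phi(x)+\tfrac12\sum_k z_k^2$ picks up an Itô correction of order $\sum_k\lambda_k/\epsilon$, which blows up as $\epsilon\downarrow 0$, so the uniform control must come from a more carefully weighted Lyapunov functional that exploits the summability $\sum_k c_k/\lambda_k^2<\infty$ enforced by Assumption~\ref{cond:whitenoise:L1converge} together with the polynomial-type growth estimate on $\Phi'(x)v$ provided by Assumption~\ref{cond:Phi:whitenoise}. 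The upgrade from probability to $L^q$ convergence sketched above is, by comparison, the softer part of the argument.
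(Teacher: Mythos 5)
Your proposal is correct, and it takes a genuinely cleaner route than the paper's own proof. The paper does not invoke Theorem~\ref{thm:limit:whitenoise:probconverge} as a black box: instead it re-runs the stopping-time decomposition from scratch, splitting $\mathbb{E}[R_\epsilon]$ over the events $\{\sigma^R\wedge\sigma^R_\epsilon<T\}$ and its complement, controlling the first term by H\"older's inequality combined with Markov's inequality and the uniform $L^2$ bounds (Proposition~\ref{prop:limit:whitenoise:L2bound} and~\eqref{ineq:whitenoise:limit:bound}), and the second term by the $L^2$ convergence of the cut-off systems from Proposition~\ref{prop:limit:whitenoise:lipschitz}, before sending $R\to\infty$ and $\epsilon\to 0$. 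Your version instead quotes the already-proven convergence in probability, establishes uniform integrability via a uniform $L^{2/q}$ bound, and concludes by Vitali; this is essentially the paper's H\"older--Markov step repackaged abstractly. The payoff of your approach is modularity and brevity; the paper's version is more self-contained and makes the role of the cut-off systems explicit. One small slip in your write-up: the inequalities you cite, namely $|a-b|^q\le 2^q(|a|^q+|b|^q)$ together with $x^q\le 1+x^2$, only deliver a uniform $L^1$ bound on $R_\epsilon$, which does not give uniform integrability. The bound you actually want follows from $(a+b)^{2/q}\le 2^{2/q-1}(a^{2/q}+b^{2/q})$ (valid since $2/q>1$) applied to $R_\epsilon^{2/q}$, followed by $|a-b|^2\le 2(a^2+b^2)$, at which point the two second-moment bounds finish the job; the correction is trivial and the conclusion stands. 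You also correctly identify that the real content is in Proposition~\ref{prop:limit:whitenoise:L2bound}, which you take as given.
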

Theorem~\ref{thm:limit:whitenoise:L1converge} strengthens a previous result from \cite{ottobre2011asymptotic}, where the potential $\Phi'$ is assumed to be bounded. The proofs of Theorem~\ref{thm:limit:whitenoise:probconverge} and Theorem~\ref{thm:limit:whitenoise:L1converge} will be carried out in Section~\ref{sec:limit:whitenoise}.

\section{Zero-mass limit}\label{sec:limit:zeromass}

Throughout the rest of the paper, $C,\,c$ denote generic positive constants. The important parameters that they depend on will be indicated in parenthesis, e.g. $c(T,q)$ depends on parameters $T$ and $q$.

In this section, for notation simplicity, we shall omit the subscript $m$ in 
\begin{align*}
X_m(t)=(x_m(t),v_m(t),z_{1,m}(t),\dots).
\end{align*}

We begin by addressing the well-posedness of~\eqref{eqn:GLE-Markov:limit:zeromass} whose proof follows a standard Lyapunov-type argument that was also used to establish the well-posedness of~\eqref{eqn:GLE-Markov} in \cite{glatt2018generalized}. The technique is classical and has been employed previously in literature  \cite{albeverio2008spde,glatt2009strong,jacod2006calcul}. We shall omit specific details and briefly summarize the main steps. 
\begin{proof}[Sketch of the proof of Proposition \ref{prop:wellposed:limit:zeromass}]
For $R>0$, let $\theta^R\in C^\infty(\rbb , [0,1])$  satisfy
\begin{align} \label{defn:theta-R}
\theta^R(x) = \begin{cases}
1 & \text{ if } |x| \leq R, \\
0 & \text{ if } |x| \geq R+1.
\end{cases}
\end{align}
We consider the ``cutoff" equation corresponding to~\eqref{eqn:GLE-Markov:limit:zeromass}
\begin{equation}\label{eqn:GLE-Markov:limit:zeromass:cutoff}
\begin{aligned}
\gamma du(t)&= \Big(\!\!-\Phi'(u(t))\theta^R(u(t))-\Big(\sum_{k\geq 1}c_k\Big) u(t)-\sum_{k\geq 1}\sqrt{c_k}f_k(t)\Big)dt+\sqrt{2\gamma}dW_0(t),\\
df_k(t)&=\big(\!\!-\lambda_k f_k(t)-\lambda_k\sqrt{c_k}u(t)\big)dt+\sqrt{2\lambda_k}dW_k(t),\quad k=1,2\dots
\end{aligned}
\end{equation}
We observe that in~\eqref{eqn:GLE-Markov:limit:zeromass:cutoff}, the drift term is globally Lipschitz and the noise is additive. Thus, by using a standard Banach fixed point argument, the corresponding global (in time) solution $U^R$ exists and is unique. Next, define the stopping time 
\[\tau_R=\inf\left\{t>0: \|U(t)\|_{\Hsdot}>R\right\}.\]
Note that, for all times $t< \tau_R$, $U^R$ solves \eqref{eqn:GLE-Markov:limit:zeromass}. Consequently, the solution \eqref{eqn:GLE-Markov:limit:zeromass} exists and is unique up until the \emph{time of explosion} $\tau_\infty=\lim_{R\to\infty}\tau_R$, which is possibly finite on a set of positive probability. We finally introduce the Lyapunov function 
\begin{equation} \label{eqn:Lyapunov:zeromass}
\Psi(U):=\frac{1}{\gamma}\bigg(\Phi(u)+\Big(\sum_{k\geq 1}c_k\Big)\frac{ u^2}{2}\bigg)+\frac{1}{2}\sum_{k \geq 1}k^{-2s}f_k^2.
\end{equation}
It is clear that $\Psi(U)$ dominates $\|U\|_{\Hsdot}^2$. Applying Ito's formula to $\Psi(U)$, one can derive a global bound on the solutions $U^R(t)$ that is independent of $R$, namely, there exists a constant $C(U_0,T)$ such that
\begin{equation*}
\Enone\Big[\sup_{0\leq t\leq T} \|U(t\wedge\tau_R)\|_{\Hsdot}^2 \Big]\leq C(T,U_0).
\end{equation*}
Sending $R$ to infinity, it follows from Fatou's Lemma that 
\begin{equation*}
\Enone\Big[\sup_{0\leq t\leq T} \|U(t\wedge\tau_\infty)\|_{\Hsdot}^2 \Big]\leq C(U_0,T).
\end{equation*}
implying $\P{T<\tau_\infty}=1$ for any $T>0$. Taking $T$ to infinity, we see that $\P{\tau_\infty=\infty}=1$, thereby obtaining the global solution of~\eqref{eqn:GLE-Markov:limit:zeromass}.
\end{proof}

Although the construction of the global solution $U(t)$ of~\eqref{eqn:GLE-Markov:limit:zeromass} via the local solutions $U^R(t)$ of~\eqref{eqn:GLE-Markov:limit:zeromass:cutoff} is quite standard, the proof of Theorem~\ref{thm:limit:zeromass} will make use of a non trivial observation on these local solutions. The arguments that we are going to employ are inspired from the work of~\cite{herzog2016small}. Before diving into detail, we briefly explain the main idea, which is a two-fold: first, we show that the result holds for $\Phi'$ being Lipschitz. In particular, we obtain the convergence in sup norm for the local solutions, namely for all $R,\,T>0$, we have
\begin{align*}
\Enone\Big[\sup_{0\leq t\leq T}\big|x^R(t)-u^R(t)\big|\Big]\rightarrow 0,\quad m\rightarrow 0,
\end{align*}
where $x^R(t)$ is in the following cut-off system for~\eqref{eqn:GLE-Markov}
\begin{equation}\label{eqn:GLE-Markov:cutoff}
\begin{aligned}
d x(t) &= v(t)\, d t, \\
m\, d v(t)&=\big(-\gamma v(t)-\Phi'(x(t))\theta^R(x(t))-\sum_{k\geq 1} \sqrt{c_k} z_k(t)\big)\,dt+\sqrt{2\gamma}\, dW_0(t), \\
d z_k(t)&=\left(-\lambda_k z_k(t)+ \sqrt{c_k}v(t)\right) \, dt+\sqrt{2\lambda_k}\, dW_k(t),\qquad k\geq 1.
\end{aligned}	
\end{equation}
Then, by taking $R$ necessarily large, we obtain the desired result.

We now proceed by showing that the result holds true in a simpler setting where $\Phi'$ is globally Lipschitz. The proof is adapted from that of Theorem 1 of~\cite{hottovy2015smoluchowski}. 
\begin{proposition}\label{prop:limit:zeromass:lipschitz} Suppose that that $\Phi'$ is globally Lipschitz and that the constants $\alpha, \beta, s$ satisfy Assumption \ref{cond:wellposed}. Let $X(t)=(x(t),v(t),z_{1}(t),\dots)$ solve~\eqref{eqn:GLE-Markov} with initial conditions
\begin{align*}
(x(0),v(0),z_{1}(0),\dots)=(x,v,z_1,\dots)\in\Hs,
\end{align*}
and $U(t)=(u(t),f_1(t),\dots)$ solve~\eqref{eqn:GLE-Markov:limit:zeromass} with initial conditions 
\begin{align*}
(u(0),f_1(0),f_2(0)\dots)=(x,z_1-\sqrt{c_1}x,z_2-\sqrt{c_2}x,\dots)\in\Hsdot. 
\end{align*}
Then, for every $T,q>0$, it holds that
\begin{align*}
\Enone\sup_{0\leq t\leq T}\big|x(t)-u(t)\big|^q\rightarrow 0,\quad m\rightarrow 0.
\end{align*}
\end{proposition}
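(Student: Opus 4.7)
The plan is to reduce the problem to a scalar integral equation for the error $e(t) := x_m(t) - u(t)$, close it by Gronwall, and combine the result with a separately established uniform-in-$m$ moment bound on $m v_m$. First, integrate the second equation of~\eqref{eqn:GLE-Markov} over $[0,t]$ and substitute $\int_0^t v_m\,ds = x_m(t) - x$ to obtain
\begin{equation*}
\gamma x_m(t) + m v_m(t) = \gamma x + m v - \int_0^t \Phi'(x_m(s))\,ds - \sum_{k\geq 1}\sqrt{c_k}\int_0^t z_k(s)\,ds + \sqrt{2\gamma}\,W_0(t).
\end{equation*}
Introduce the shifted variables $g_k(t) := z_k(t) - \sqrt{c_k}\,x_m(t)$; a direct Ito computation shows that $g_k$ solves exactly the same linear SDE as $f_k$ in~\eqref{eqn:GLE-Markov:limit:zeromass}, but driven by $x_m$ rather than $u$, and the initial data match by construction, $g_k(0) = z_k - \sqrt{c_k}\,x = f_k(0)$. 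Setting $h_k := g_k - f_k$ one obtains the deterministic linear ODE $dh_k = -\lambda_k h_k\,dt - \lambda_k\sqrt{c_k}\,e\,dt$ with $h_k(0) = 0$, which Duhamel solves as $h_k(t) = -\lambda_k\sqrt{c_k}\int_0^t e^{-\lambda_k(t-r)}\,e(r)\,dr$.

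Subtracting the integrated $u$-equation from the displayed equation for $x_m$, the noise terms cancel identically since both systems share the same Brownian motions. After writing $z_k = g_k + \sqrt{c_k}\,x_m$ and using Fubini to swap the $k$-sum with the $t$-integral, the contribution of the auxiliary variables collapses to a convolution with the memory kernel $K$ from~\eqref{eqn:K}, yielding the master identity
\begin{equation*}
\gamma e(t) = -m\bigl(v_m(t) - v\bigr) - \int_0^t \bigl[\Phi'(x_m(s)) - \Phi'(u(s))\bigr]\,ds - \int_0^t K(t-r)\,e(r)\,dr.
\end{equation*}
With $L$ the Lipschitz constant of $\Phi'$ and the trivial bound $\sup_{[0,T]} K \leq K(0) = \sum_k c_k < \infty$, writing $e^*(t) := \sup_{s\leq t}|e(s)|$ and applying Gronwall gives
\begin{equation*}
e^*(T) \leq C(T,L,K)\,m\,\Bigl(\sup_{0\leq t\leq T}|v_m(t)| + |v|\Bigr).
\end{equation*}

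The heart of the proof, and the main obstacle, is therefore the uniform-in-$m$ moment bound
\begin{equation*}
\sup_{m\in(0,1]} \Enone{\sup_{0\leq t\leq T}\bigl(m\,v_m(t)^2\bigr)^{q}} < \infty,
\end{equation*}
since rewriting $m^q|v_m|^q = m^{q/2}(\sqrt{m}|v_m|)^q$ then yields $\Enone{(e^*(T))^q} \leq C(T,q)\,m^{q/2} \to 0$. To establish this bound I would apply Ito's formula to the energy functional
\begin{equation*}
\Psi_m(X) := \frac{m v^2}{2} + \Phi(x) + \frac{1}{2}\sum_{k\geq 1} k^{-2s} z_k^2,
\end{equation*}
exploiting the cancellation in expectation between the source $\gamma/m$ produced by the quadratic variation of $v$ and the friction $-\gamma v^2$. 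The non-diagonal cross terms $v\sqrt{c_k}(1 - k^{-2s})z_k$ are to be absorbed into $-\lambda_k k^{-2s} z_k^2$ by Young's inequality; the summability of the resulting weighted series is guaranteed by the compatibility condition $2s < \alpha\beta$ that holds in each of the three regimes of Assumption~\ref{cond:wellposed}. A Burkholder-Davis-Gundy estimate on the stochastic integrals then upgrades pointwise moments to the supremum over $t \in [0,T]$, completing the argument when combined with the Gronwall estimate above.
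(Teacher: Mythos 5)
Your reduction step is sound and is essentially the paper's, just organized more cleanly: you write the error $e = x_m - u$ as a single integral identity in terms of the kernel $K$ (after collapsing the auxiliary variables), whereas the paper keeps the series $\sum_k c_k\lambda_k\int_0^t e^{-\lambda_k(t-r)}\xbar(r)dr$ explicit and bounds it by $(\sum_k c_k)\sup_r|\xbar(r)|$. Both lead to the same Gronwall bound $|e^*(T)| \leq C(T)\, m\sup_{[0,T]}|v_m - v|$. The difference is cosmetic; your master identity is a nice observation.

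The gap is in what you call the heart of the proof. The claimed uniform bound $\sup_{m\in(0,1]}\Enone\sup_{0\leq t\leq T}(m\,v_m(t)^2)^q < \infty$ is false: $\sqrt{m}\,v_m$ is an order-one process with correlation time $m/\gamma$, so its running supremum over $[0,T]$ grows (slowly but without bound) as $m\to 0$. What is actually true, and what the result needs, is the much weaker statement $m^q\Enone\sup_{[0,T]}|v_m|^q\to 0$. Your energy-functional route cannot produce even that. First, the Ito drift contains $\int_0^t(\gamma/m - \gamma v_m^2)\,ds$; this does not cancel pathwise, and $\Enone\sup_{t\leq T}\int_0^t(\gamma/m - \gamma v_m^2)\,ds$ scales like $T/m$, since $|v_m^2 - 1/m|$ has expectation of order $1/m$. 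Second, and independently, the BDG upgrade you invoke fails: the leading martingale is $\sqrt{2\gamma}\int_0^t v_m\,dW_0$, whose quadratic variation is $\sim T/m$, so BDG contributes $\sim m^{-1/2}$ to $\Enone\sup_t\Psi_m$ and hence to $\Enone\sup_t(mv_m^2)$, which is not uniformly bounded. The paper avoids both obstacles entirely by not using an energy functional at all: it writes $v_m$ via Duhamel, so the fixed-strength noise $\sqrt{2\gamma}\,dW_0$ appears as the stochastic convolution $\sqrt{2\gamma}\int_0^t e^{-\gamma(t-r)/m}\,dW_0(r)$, and then applies the sharp estimate of Lemma~\ref{lem:limit:zeromass}, namely $\Enone\sup_{[0,T]}|\sqrt{2\kappa}\int_0^t e^{-\eta(t-r)}dW|^{2q}\leq C(T,q)\,\kappa^q/\eta^{q-1}$. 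With $\kappa=\gamma$ and $\eta=\gamma/m$ this gives $\gamma\, m^{q-1}$ with $q>1$, which is the actual source of the $m\to 0$ decay. That explicit $(\kappa,\eta)$-dependence is the missing ingredient; without it (or an equivalent exploitation of the rapidly decaying kernel $e^{-\gamma(t-r)/m}$) the moment bound does not close.
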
  
\begin{proof} Using Duhamel's formula, $z_{k}(t)$ from~\eqref{eqn:GLE-Markov} can be solved explicitly as
\begin{align}\label{eqn:limit:zeromass:lipschitz:0}
z_{k}(t) = e^{-\lambda_k t}z_k(0)+\sqrt{c_k}\int_0^t e^{-\lambda_k(t-r)}v(r)dr+\sqrt{2\lambda_k}\int_0^t e^{-\lambda_k(t-r)}dW_k(r),
\end{align}
which is equivalent to
\begin{align*}
z_{k}(t)& = e^{-\lambda_k t}(z_k(0)-\sqrt{c_k}x(0))+\sqrt{c_k}x(t)-\sqrt{c_k}\lambda_k\int_0^t e^{-\lambda_k(t-r)}x(r)dr\\
&\qquad\qquad+\sqrt{2\lambda_k}\int_0^t e^{-\lambda_k(t-r)}dW_k(r),
\end{align*}
where we have used an integration by parts on the term $\int_0^t e^{-\lambda_k(t-r)}v(r)dr$ in the first equality. Substituting into the second equation of~\eqref{eqn:GLE-Markov}, we arrive at
\begin{equation}\label{eqn:limit:zeromass:lipschitz:1}
\begin{aligned} 
\MoveEqLeft[2]m\,dv(t)+\gamma\, dx(t)\\
 &= \bigg(\!\!-\Phi'(x(t))-\sum_{k\geq 1}\sqrt{c_k}e^{-\lambda_k t}(z_k(0)-\sqrt{c_k}x(0))-\Big(\sum_{k\geq 1}c_k\Big)x(t)\\
&\qquad+\sum_{k\geq 1}c_k\lambda_k\int_0^t e^{-\lambda_k(t-r)}x(r)dr -\sum_{k\geq 1}\sqrt{2c_k\lambda_k}\int_0^t e^{-\lambda_k(t-r)}dW_k(r)\bigg)dt\\
&\qquad+\sqrt{2\gamma}dW_0(t).
\end{aligned}
\end{equation}
Likewise, we obtain the following equation from~\eqref{eqn:GLE-Markov:limit:zeromass}
\begin{equation}\label{eqn:limit:zeromass:lipschitz:2}
\begin{aligned} 
 \MoveEqLeft[2] \gamma\, du(t)\\
 &= \bigg(\!\!-\Phi'(u(t))-\Big(\sum_{k\geq 1}c_k\Big)u(t)-\sum_{k\geq 1}\sqrt{c_k}e^{-\lambda_k t}(z_k(0)-\sqrt{c_k}x(0))\\
&\qquad+\sum_{k\geq 1}c_k\lambda_k\int_0^t e^{-\lambda_k(t-r)}u(r)dr -\sum_{k\geq 1}\sqrt{2c_k\lambda_k}\int_0^t e^{-\lambda_k(t-r)}dW_k(r)\bigg)dt\\
&\qquad+\sqrt{2\gamma}dW_0(t).
\end{aligned}
\end{equation}
Subtracting~\eqref{eqn:limit:zeromass:lipschitz:2} from~\eqref{eqn:limit:zeromass:lipschitz:1} and setting $\xbar(t)=x(t)-u(t)$, we find that
\begin{align*}
\MoveEqLeft[3]m\,dv(t)+\gamma\, d\xbar(t)\\
&= \bigg(\!\!-\big[\Phi'(x(t))-\Phi'(u(t))\big]-\Big(\sum_{k\geq 1}c_k\Big)\xbar(t)+\sum_{k\geq 1}c_k\lambda_k\int_0^t e^{-\lambda_k(t-r)}\xbar(r)dr \bigg)dt\\
&\leq c\Big(1+\sum_{k\geq 1}c_k\Big)\sup_{0\leq r\leq t}|\xbar(r)|dt,
\end{align*}
where $c>0$ is a Lipschitz constant for $\Phi'$. Recalling $c_k$ from~\eqref{c-k}, we apply Gronwall's inequality to estimate for all $T,\,q>0$
\begin{align*}
\Enone\sup_{0\leq t\leq T}|\xbar(t)|^q\leq m^q\Enone\sup_{0\leq t \leq T}|v(t)-v|^q\,e^{c(T)}.
\end{align*}
The result now follows immediately from Proposition \ref{prop:limit:zeromass:lipschitz:1} below.
\end{proof}

\begin{proposition}\label{prop:limit:zeromass:lipschitz:1} Under the same Hypothesis of Theorem~\ref{thm:limit:zeromass}, suppose further that $\Phi'(x)$ is globally Lipschitz. Let $X(t)=(x(t),v(t),z_{1}(t),\dots)$ solve~\eqref{eqn:GLE-Markov} with initial conditions $(x(0),v(0),z_{1}(0),\dots)=(x,v,z_1,\dots)\in\Hs$. Then, for every $T>0,\,q>1$, it holds that
\begin{align*}
m^q\Enone\sup_{0\leq t\leq T}|v(t)|^q\to 0,\quad m\to 0.
\end{align*}
\end{proposition}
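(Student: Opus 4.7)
The plan is to solve the $v$-equation in \eqref{eqn:GLE-Markov} via Duhamel's formula and to estimate each piece of the resulting decomposition separately. Writing the second equation of \eqref{eqn:GLE-Markov} as a linear equation in $v$ with right-hand side forced by $\Phi'(x)$, the $z_k$'s, and the Brownian motion, one obtains
\begin{equation*}
mv(t) = me^{-\gamma t/m}v + \int_0^t e^{-\gamma(t-s)/m}\Big(\!\!-\Phi'(x(s)) - \sum_{k\geq 1}\sqrt{c_k}z_k(s)\Big)ds + \sqrt{2\gamma}\int_0^t e^{-\gamma(t-s)/m}dW_0(s).
\end{equation*}
The first term is pointwise bounded by $m|v|$, contributing $O(m^q)$ in $L^q$. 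For the drift integral, the kernel $e^{-\gamma(t-s)/m}$ has $L^1$-norm at most $m/\gamma$, so a sup estimate is controlled by $\tfrac{m}{\gamma}\sup_{s\leq T}\big(|\Phi'(x(s))| + |\sum_k\sqrt{c_k}z_k(s)|\big)$. The Lipschitz hypothesis gives $|\Phi'(x)| \leq C(1+|x|)$, and Cauchy--Schwarz together with $\sum_k c_k k^{2s} < \infty$ (which holds in all three regimes of Assumption~\ref{cond:wellposed} since $2s<\alpha\beta$) gives $|\sum_k\sqrt{c_k}z_k| \leq C\|Z\|_{-s}$. Hence the drift contribution to $mv(t)$ is dominated in supremum by $Cm\,\sup_{[0,T]}\|X(s)\|_{-s}$.

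To close this, I would establish a moment bound $\Enone\sup_{0\leq t\leq T}\|X(t)\|_{-s}^q \leq C(T,q,X_0)(1+m^{-q/2})$ for every $q\geq 1$. The natural Lyapunov function is $V(X)=\tfrac{m}{2}v^2 + \Phi(x) + \tfrac{1}{2}\sum_k k^{-2s}z_k^2$, which dominates $\|X\|_{-s}^2$ by Assumption~\ref{cond:Phi}. Applying Itô's formula, the $\Phi'(x)v$ terms cancel and the coupling $v\sum_k (1-k^{-2s})\sqrt{c_k}z_k$ is absorbed via Cauchy--Schwarz, using $\sum_k c_k k^{2s}<\infty$, into $\tfrac{\gamma}{2}v^2$ plus a multiple of $\|Z\|_{-s}^2$; the Itô correction produces the term $\gamma/m$ from the quadratic variation of $v$, plus the summable contribution $\sum_k k^{-2s}\lambda_k<\infty$ from the $z_k$'s. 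Standard Gronwall together with the Burkholder--Davis--Gundy inequality applied to the martingale part of $V$ then yields the claimed moment bound. Substituting back, the drift part of $mv(t)$ is controlled in $L^q$ by $Cm^q(1+m^{-q/2})=C(m^q+m^{q/2})\to 0$ as $m\to 0$.

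For the stochastic integral $J(t):=\sqrt{2\gamma}\int_0^t e^{-\gamma(t-s)/m}dW_0(s)$, I would exploit that $J$ is an Ornstein--Uhlenbeck process solving $dJ = -(\gamma/m)J\,dt+\sqrt{2\gamma}\,dW_0$ with $J(0)=0$ and stationary variance $m$. Introducing the time-rescaled process $\widetilde J(\tau):=J(m\tau)/\sqrt{m}$ and the rescaled Brownian motion $\widetilde W_0(\tau):=W_0(m\tau)/\sqrt{m}$, one checks that $\widetilde J$ is a standard OU process with unit stationary variance, run over $[0,T/m]$. A classical maximal estimate for a stationary OU process (or a direct Gaussian supremum bound via Borell--TIS) then gives $\Enone\sup_{0\leq\tau\leq T/m}|\widetilde J(\tau)|^q \leq C_q(1+\log(T/m))^{q/2}$, and therefore $\Enone\sup_{[0,T]}|J(t)|^q \leq Cm^{q/2}(1+|\log m|)^{q/2}\to 0$. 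Combining the three estimates proves $m^q\Enone\sup_{[0,T]}|v(t)|^q\to 0$.

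The main technical obstacle lies in the Lyapunov estimate of the second step: one must push through the moment bound with an explicit rate $m^{-q/2}$ so that the $m^q$ prefactor from Duhamel outweighs the blow-up. The competition between the stiff drift $-\gamma v/m$, the Itô correction $\gamma/m$ from $\tfrac{m}{2}v^2$, and the mismatched couplings $v\sum_k(1-k^{-2s})\sqrt{c_k}z_k$ under the chosen weights is what needs to be tracked carefully; everything is held together by the summability $\sum_k c_k k^{2s}<\infty$ and $\sum_k k^{-2s}\lambda_k<\infty$ provided by Assumption~\ref{cond:wellposed}.
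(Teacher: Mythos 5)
Your proposal is essentially sound but follows a genuinely different route from the paper's proof, and it contains one imprecision worth flagging. The paper also starts from Duhamel's formula for $v$, but it then \emph{also} substitutes the Duhamel representation of $z_k$ back into the $v$-equation, producing a closed expression in $(x,v)$, the initial data, and noise integrals. It then Gronwalls directly on $m^{2q}\Enone\sup v^{2q}$ (together with $\Enone\sup x^{2q}$), with no separate Lyapunov estimate; the OU maximal estimate needed is exactly Lemma~\ref{lem:limit:zeromass}, which gives $\Enone\sup_{t\le T}\big|\sqrt{2\gamma}\int_0^t e^{-\gamma(t-r)/m}dW_0\big|^{2q}\le C(T,q)\gamma m^{q-1}$, and the same lemma is re-used for the $\int e^{-\lambda_k(\cdot)}dW_k$ terms that appear after the $z_k$-substitution. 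You instead keep the $z_k$'s as they are, establish a separate $m$-dependent moment bound via a Lyapunov function, and handle the OU integral by rescaling; the Borell--TIS bound you invoke gives $m^{q/2}(\log(T/m))^{q/2}$, which is in fact tighter than the paper's $m^{(q-1)/2}$ and works for all $q>0$ rather than requiring $q$ large. The trade-off is that your route must pay for the stiff drift through the Lyapunov estimate, which the paper's substitution sidesteps.

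The imprecision: the Lyapunov function $V=\tfrac{m}{2}v^2+\Phi(x)+\tfrac12\sum_k k^{-2s}z_k^2$ does \emph{not} dominate $\|X\|_{-s}^2$ uniformly in $m$, precisely because the $v^2$ term carries the weight $m/2$; accordingly the claimed bound $\Enone\sup\|X\|_{-s}^q\le C(1+m^{-q/2})$ fails for the $v$-component, whose true rate is one power of $m^{-1}$ worse. This is harmless in your argument because the drift integral involves only $|\Phi'(x)|$ and $|\sum_k\sqrt{c_k}z_k|\le C\|Z\|_{-s}$, never $v$; restating the intermediate bound as $\Enone\sup_{t\le T}\big(|x(t)|+\|Z(t)\|_{-s}\big)^q\le C(T,q,X_0)(1+m^{-q/2})$, which does follow from $V$ since $\Phi(x)\gtrsim x^2-1$ and $V\ge\tfrac12\|Z\|_{-s}^2$, repairs the statement. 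You should also check that the higher-moment Gronwall for $V^{q/2}$ closes: the Itô correction of $V^{q/2}$ produces a term $\propto V^{q/2-2}\gamma v^2$ carrying a hidden $1/m$ through $v^2\le 2V/m$, and it is the good dissipation $-\gamma v^2 V^{q/2-1}$ from the drift that must absorb it; this works out but deserves a line in a full write-up, as you yourself anticipate.
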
 

In order to prove Proposition~\ref{prop:limit:zeromass:lipschitz:1}, we need the following important lemma whose proof is based on Lemma 3.19, \cite{blount1991comparison} and Lemma 2, \cite{hottovy2015smoluchowski}. It will be also useful later in Section~\ref{sec:limit:whitenoise}.
\begin{lemma}\label{lem:limit:zeromass} Given $\kappa,\,\eta>0$, let $f(t) = \sqrt{2\kappa}\int_0^t e^{-\eta(t-r)}dW(r)$ where $W(t)$ is a standard Brownian Motion. Then, for all $T>0,\, q> 1$, there exists a constant $C(T,q)>0$ such that
\begin{align}\label{ineq:limit:zeromass:1}
\Enone\sup_{0\leq t\leq T}f(t)^{2q}\leq \frac{\kappa^q}{\eta^{q-1}} C(T,q).
\end{align}
\end{lemma}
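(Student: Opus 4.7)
The plan is to exploit the OU-type structure of $f$: since $f(t)=\sqrt{2\kappa}\int_0^t e^{-\eta(t-r)}\,dW(r)$ is the zero-initial-data mild solution of the linear SDE
\[
df(t)=-\eta\, f(t)\,dt+\sqrt{2\kappa}\,dW(t),\qquad f(0)=0,
\]
$f(t)$ is a centered Gaussian variable with variance $\sigma^2(t)=\frac{\kappa}{\eta}(1-e^{-2\eta t})\le \kappa/\eta$. Pointwise this already yields $\E{f(t)^{2q}}\le c_q(\kappa/\eta)^q$; the extra factor of $\eta$ in the announced estimate $\kappa^q/\eta^{q-1}$ will appear when we pass from pointwise Gaussian moments of $f^{2q-2}$ to a supremum estimate on $f^{2q}$ via an integration $\int_0^T ds$.

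First I would apply It\^o's formula (to a mollified version of $x\mapsto x^{2q}$ if $2q$ is not an even integer, then pass to the limit) to get
\[
f(t)^{2q}+2q\eta\int_0^t\! f(s)^{2q}\,ds = 2\kappa q(2q-1)\int_0^t\! f(s)^{2q-2}\,ds + 2q\sqrt{2\kappa}\int_0^t\! f(s)^{2q-1}\,dW(s).
\]
The nonnegative integral on the left may be dropped. Taking $\sup_{t\le T}$ and expectation then gives
\[
\E{\sup_{t\le T}f(t)^{2q}} \le 2\kappa q(2q-1)\,\E{\int_0^T\! f(s)^{2q-2}\,ds} + 2q\sqrt{2\kappa}\,\E{\sup_{t\le T}\Bigl|\int_0^t\! f(s)^{2q-1}\,dW(s)\Bigr|}.
\]

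For the stochastic-integral term, the Burkholder-Davis-Gundy inequality together with the trivial bound $\int_0^T\! f^{4q-2}\,ds\le (\sup_{t\le T}f^{2q})\int_0^T\! f^{2q-2}\,ds$ and Cauchy-Schwarz give
\[
\E{\sup_{t\le T}\Bigl|\int_0^t\! f^{2q-1}\,dW\Bigr|} \le C_{\mathrm{BDG}}\,\E{\Bigl(\sup_{t\le T}f^{2q}\Bigr)^{1/2}\Bigl(\int_0^T\! f^{2q-2}\,ds\Bigr)^{1/2}}.
\]
A Young's inequality with parameter $\epsilon$ tuned so that, after combining with the $2q\sqrt{2\kappa}$ prefactor, the resulting coefficient of $\E{\sup f^{2q}}$ on the right is exactly $1/2$ lets me absorb half of $\E{\sup f^{2q}}$ into the left-hand side and obtain
\[
\E{\sup_{t\le T}f(t)^{2q}} \le C(q)\,\kappa\,\E{\int_0^T\! f(s)^{2q-2}\,ds}.
\]
Finally, the pointwise Gaussian estimate $\E{f(s)^{2q-2}}\le c_q(\kappa/\eta)^{q-1}$ together with Fubini produces
\[
\E{\sup_{t\le T}f(t)^{2q}} \le C(T,q)\,\frac{\kappa^q}{\eta^{q-1}},
\]
which is the claimed inequality.

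The main technical subtlety is the absorption step: the BDG bound reintroduces $\E{\sup f^{2q}}$ on the right-hand side with a coefficient proportional to $\sqrt{\kappa}$, so $\epsilon$ in Young's inequality must be chosen to depend on $\kappa$ and $q$ in exactly the right way for the leftover coefficient of $\E{\int_0^T f^{2q-2}\,ds}$ to remain linear in $\kappa$ (and not, say, $\kappa^{3/2}$). This is precisely the calculation performed in Lemma~3.19 of~\cite{blount1991comparison} and Lemma~2 of~\cite{hottovy2015smoluchowski}, which I would follow closely.
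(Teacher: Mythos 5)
Your argument is correct, but it follows a genuinely different route from the paper's. The paper's proof is purely a tail-estimate computation: it invokes the probability bound from Lemma~3.19 of \cite{blount1991comparison},
\begin{align*}
\Pnone\Big\{\sup_{0\leq t\leq T}f(t)^2\geq A\Big\}\leq \frac{\eta T}{\int_0^{\sqrt{\eta A/\kappa}}e^{r^2/2}\int_0^r e^{-\ell^2/2}\,d\ell\,dr},
\end{align*}
lower-bounds the denominator by elementary inequalities to obtain $\Pnone\{\sup_{t\le T}f(t)^2\geq A\}\le \frac{4\kappa T}{A}e^{-\eta A/8\kappa}$, and then integrates the tail against $qA^{q-1}\,dA$. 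Your approach instead writes $f$ as an Ornstein--Uhlenbeck process, applies It\^o's formula to $f^{2q}$, drops the nonnegative dissipative integral, controls the martingale term via Burkholder--Davis--Gundy, absorbs half of $\E{\sup_t f^{2q}}$ via Young's inequality with a carefully tuned $\kappa$-dependent parameter, and closes with the pointwise Gaussian moment estimate $\E{f(s)^{2q-2}}\le c_q(\kappa/\eta)^{q-1}$. Both give $C(T,q)\,\kappa^q/\eta^{q-1}$, but the paper's route has the advantage of delivering the sharp exponential tail $\frac{4\kappa T}{A}e^{-\eta A/8\kappa}$ as an intermediate byproduct, while yours is more self-contained (no external tail lemma) and makes the mechanism for the $\kappa^q/\eta^{q-1}$ scaling visible from the SDE itself.

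Two small points worth flagging. First, the absorption step implicitly assumes $\E{\sup_{t\le T}f(t)^{2q}}<\infty$ a priori; this is true here (e.g.~write $f(t)=\sqrt{2\kappa}\,e^{-\eta t}\int_0^t e^{\eta r}\,dW(r)$ and apply BDG to the stopped martingale, or use Gaussian tail bounds for continuous Gaussian processes), but strictly one should run the It\^o argument up to a localizing stopping time and pass to the limit, as is standard. Second, your closing attribution is not accurate: Lemma~3.19 of \cite{blount1991comparison} and the argument in Lemma~2 of \cite{hottovy2015smoluchowski} are the tail-estimate route that the paper actually follows, not the It\^o/BDG/Young computation you carried out. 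Your computation is correct, it just is not the one those references (or this paper) perform.
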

\begin{remark} The estimate in~\eqref{ineq:limit:zeromass:1} is sharper than the usual exponential martingale estimate. In finite-dimensional settings, it is sufficient to bound the LHS of~\eqref{ineq:limit:zeromass:1} by $C(T,q,\eta,\kappa)$, cf.~\cite{blount1991comparison,hottovy2015smoluchowski,lim2017homogenization}. In our setting, we have to keep track explicitly in term of $\eta$ and $\kappa$, hence the RHS of~\eqref{ineq:limit:zeromass:1}. 
\end{remark}
The proof of Lemma~\ref{lem:limit:zeromass} is similar to that of Lemma 2,~\cite{hottovy2015smoluchowski}. We include it here for the sake of completeness.
\begin{proof}[Proof of Lemma~\ref{lem:limit:zeromass}] 
In view of Lemma 3.19, \cite{blount1991comparison}, we have the following estimate for $A>0$,
\begin{align*}
\Pnone\Big\{\sup_{0\leq t\leq T}f(t)^2\geq A\Big\}\leq \frac{\eta T}{\int_0^{\sqrt{\eta A/\kappa}}e^{r^2/2}\int_0^r e^{-\ell^2/2}d\ell\,dr}.
\end{align*}
We proceed to find a lower bound for the above denominator. To this end, we first claim that for $r\geq 0$,
\begin{align*}
\int_0^r e^{-\ell^2/2}d\ell \geq \frac{re^{-r^2/4}}{2}.
\end{align*}
Indeed, on one hand, if $r\geq 1$, then
\begin{align*}
e^{r^2/4}\int_0^r e^{-\ell^2/2}d\ell \geq e^{r^2/4}\int_0^1 e^{-\ell^2/2}d\ell\geq e^{r^2/4}\int_0^1 e^{-1/2}d\ell\geq \frac{e^{r^2/4}}{2}\geq \frac{r}{2}.
\end{align*}
On the other hand, if $0\leq r\leq 1$, then
\begin{align*}
e^{r^2/4}\int_0^r e^{-\ell^2/2}d\ell\geq \int_0^r 1-\frac{\ell^2}{2} d\ell=r\big(1-\frac{r^2}{6}\big)\geq \frac{r}{2}.
\end{align*}
With this observation, we find
\begin{align*}
\int_0^{\sqrt{\eta A/\kappa}}e^{r^2/2}\int_0^r e^{-\ell^2/2}d\ell\,dr \geq \int_0^{\sqrt{\eta A/\kappa}}e^{r^2/2}\frac{r e^{-r^2/4}}{2}dr=e^{\eta A/4\kappa}-1\geq \frac{\eta A}{4\kappa} e^{\eta A/8\kappa},
\end{align*}
where in the last implication, we have used the following inequality for every $r\geq 0$, 
\begin{align*}
e^r-1\geq re^{r/2}.
\end{align*}
Putting everything together, we obtain
\begin{align*}
\Pnone\bigg\{\sup_{0\leq t\leq T}f(t)^2\geq A\bigg\}\leq \frac{\eta T}{\frac{\eta A}{4\kappa} e^{\eta A/8\kappa}}=\frac{4\kappa T}{A}e^{-\eta A/8\kappa}.
\end{align*}
It follows that for $q>2$,
\begin{align*}
\Enone\sup_{0\leq t\leq T}f(t)^{2q} & = \int_0^\infty qA^{q-1} \Pnone\bigg\{\sup_{0\leq t\leq T}f(t)^2\geq A\bigg\}dA\\
&\leq \int_0^\infty qA^{q-1}\frac{4\kappa T}{A}e^{-\eta A/8\kappa}dA\\
&=C(T,q)\kappa\int_0^\infty A^{q-2}e^{-\eta A/8\kappa}dA\\
&=C(T,q) \frac{\kappa^q}{\eta^{q-1}},
\end{align*}
which completes the proof.
\end{proof}
With Lemma~\ref{lem:limit:zeromass} in hand, we are ready to give the proof of Proposition~\ref{prop:limit:zeromass:lipschitz:1}.
\begin{proof}[Proof of Proposition~\ref{prop:limit:zeromass:lipschitz:1}] We only have to prove the result for $q>0$ sufficiently large, say $q\geq q_1$. As if it holds for $q_1$, then for every $q<q_1$, by Holder's inequality, we have
\begin{align*}
\Enone\big[m^q\sup_{0\leq t\leq T}|v(t)|^q\big]\leq\Big( \Enone\big[m^{q_1}\sup_{0\leq t\leq T}|v(t)|^{q_1}\big]\Big)^{q/q_1}\to 0,\quad\text{as}\quad m\to 0.
\end{align*}
We begin by noting that $v(t)$ from~\eqref{eqn:GLE-Markov} is written as
\begin{align*}
m\,v(t)&= me^{-\frac{\gamma}{m}t}v(0)-\int_0^te^{-\frac{\gamma}{m}(t-r)}\Phi'(x(r))dr-\sum_{k\geq 1}\sqrt{c_k}\int_0^t e^{-\frac{\gamma}{m}(t-r)}z_k(r)dr\\
&\qquad+\sqrt{2\gamma}\int_0^te^{-\frac{\gamma}{m}(t-r)}dW_0(r).
\end{align*}
Substituting $z_k(t)$ from~\eqref{eqn:limit:zeromass:lipschitz:0}, we have
\begin{align*}
m\,v(t)&= me^{-\frac{\gamma}{m}t}v(0)-\int_0^te^{-\frac{\gamma}{m}(t-r)}\Phi'(x(r))dr-\sum_{k\geq 1}\sqrt{c_k}\int_0^t e^{-\frac{\gamma}{m}(t-r)}e^{-\lambda_k r}z_k(0)dr\\
&\qquad -\sum_{k\geq 1}c_k\int_0^t e^{-\frac{\gamma}{m}(t-r)}\int_0^r e^{-\lambda_k(r-\ell)}v(\ell)d\ell\,dr\\
&\qquad-\sum_{k\geq 1}\sqrt{2c_k\lambda_k}\int_0^t e^{-\frac{\gamma}{m}(t-r)}\int_0^re^{-\lambda_k(r-\ell)}dW_k(\ell)\,dr\\
&\qquad+\sqrt{2\gamma}\int_0^te^{-\frac{\gamma}{m}(t-r)}dW_0(r).
\end{align*}
For every $q$ sufficiently large, we invoke the assumption that $\Phi'$ is Lipschitz to estimate
\begin{equation*}
\begin{aligned}
\MoveEqLeft[2]
m^{2q}\Enone\sup_{0\leq t\leq T}v(t)^{2q}\\
&\leq c(q,v) \bigg[ m^{2q}\Big[1+\Enone\sup_{0\leq t\leq T}x(t)^{2q}+\big|\sum_{k\geq 1}\sqrt{c_k}z_k\big|^{2q}\\
&\qquad +c(T)\big|\sum_{k\geq 1}c_k\big|^{2q}\int_0^T\Enone\sup_{0\leq r\leq t}v(r)^{2q}dt\\
&\qquad +\big(\sum_{k\geq 1}c_k^{(1/2-1/2q)q*}\big)^{2q/q*}\sum_{k\geq 1}\Enone\sup_{0\leq t\leq T}\Big|\sqrt{2c_k^{1/q}\lambda_k}\int_0^t e^{-\lambda_k(t-r)}dW_k(r)\Big|^{2q}\Big]\\
&\qquad +\Enone\sup_{0\leq t\leq T}\Big|\sqrt{2\gamma}\int_0^t e^{-\frac{\gamma}{m}(t-r)}dW_0(r)\Big|^{2q}\bigg],
\end{aligned}
\end{equation*}
where in the third line, we have used Holder's inequality with $\frac{1}{q^*}+\frac{1}{2q}=1$. Also, note that from the first equation of~\eqref{eqn:GLE-Markov}, it holds that
\begin{align*}
\Enone\sup_{0\leq t\leq T}x(t)^{2q}\leq c(q)\Big(x^{2q}+\int_0^T\Enone\sup_{0\leq r\leq t}v(r)^{2q}dt\Big),
\end{align*}
and that by Lemma~\ref{lem:limit:zeromass}, we have
\begin{align*}
\sum_{k\geq 1}\Enone\sup_{0\leq t\leq T}\Big|\sqrt{2c_k^{1/q}\lambda_k}\int_0^t e^{-\lambda_k(t-r)}dW_k(r)\Big|^{2q}\leq c(T,q)\sum_{k\geq 1}c_k\lambda_k,
\end{align*}
and
\begin{align*}
\Enone\sup_{0\leq t\leq T}\Big|\sqrt{2\gamma}\int_0^t e^{-\frac{\gamma}{m}(t-r)}dW_0(r)\Big|^{2q}\leq c(T,q)\gamma m^{q-1}.
\end{align*}
Furthermore, recalling $c_k$ from~\eqref{c-k}, we see that for $q>0$ sufficiently large
\begin{align*}
\sum_{k\geq 1}c_k^{(1/2-1/2q)q*}=\sum_{k\geq 1}\frac{1}{k^{(1+\alpha\beta)(1/2-1/2q)q*}}<\infty,
\end{align*}
thanks to the fact that $q*>1$ and $\alpha\beta>1$, where the latter follows from the conditions about $\alpha,\,\beta$ in Assumption~\ref{cond:wellposed}. Also, recalling $\lambda_k$ from~\eqref{c-k} and the norm $\|\cdot\|_{-s}$ from~\eqref{eqn:H_p}, it is straightforward to verify that the sums $\sum_{k\geq 1}\sqrt{c_k}z_k$, $\sum_{k\geq 1}c_k$, and $\sum_{k\geq 1}c_k\lambda_k$ are absolutely convergent. Putting everything together, we find
\begin{align*}
m^{2q}\Enone\sup_{0\leq t\leq T}v(t)^{2q}&\leq c(T,q,X(0))\Big[m^{2q}+m^{q-1}+m^{2q}\int_0^T\Enone\sup_{0\leq r\leq t}v(r)^{2q}dt\Big],
\end{align*}
where $c(T,q,X(0))>0$ is independent with $m$. Gronwall's inequality now implies
\begin{align*}
m^{2q}\Enone\sup_{0\leq t\leq T}v(t)^{2q}\leq c(T,q,X(0))(m^{2q}+m^{q-1})\to 0,\quad m\to 0.
\end{align*}
The proof is thus complete.
\end{proof}

We now turn our attention to Theorem~\ref{thm:limit:zeromass}. The proof is a slightly modification from that of Theorem 2.4 of~\cite{herzog2016small}. The key observation is that instead of controlling the exiting time of the process $x(t)$ as $m\to 0$, we are able to control $u(t)$ since $u(t)$ is independent of $m$. 
\begin{proof}[Proof of Theorem~\ref{thm:limit:zeromass}]
For $R,\,m>0$, define the following stopping times
\begin{equation} \label{eqn:stoppingtime:zeromass}
\sigma^R = \inf_{t\geq 0}\{|u(t)|\geq R\},\quad\text{ and }\quad\sigma^R_m = \inf_{t\geq 0}\{|x(t)|\geq R\},
\end{equation}
and recall 
\begin{align*}
\tau^R = \inf_{t\geq 0}\{\|U(t)\|_{\Hsdot}\geq R\},\quad\text{ and }\quad\tau^R_m = \inf_{t\geq 0}\{\|X(t)\|_{\Hs}\geq R\}.
\end{align*}
By the definitions of the norms in $\Hsdot$, cf.~\eqref{eqn:Hsdot}, we see that $\tau^R\leq \sigma^R$ a.s. From the proof of Proposition~\ref{prop:wellposed:limit:zeromass}, it is straight forward to verify that for all $T>0$,
\begin{align*}
\P{\sigma^R< T}&\leq \P{\tau^R< T} \to 0,\quad R\to \infty.
\end{align*}
For $R,\,T,\,m,\,\xi>0$, we have
\begin{align*}
\Pnone\Big\{\sup_{0\leq t\leq T}|x(t)-u(t)|>\xi\Big\}&\leq \Pnone\Big\{\sup_{0\leq t\leq T}|x(t)-u(t)|>\xi,\sigma^R\mi\sigma^R_m\geq T\Big\}\\
&\qquad+\P{\sigma^R\mi\sigma^R_m<T}.
\end{align*}
To control the first term on the above RHS, we note that for $0\leq t\leq \sigma^R\mi\sigma^R_m$, $u(t)=u^R(t)$ and $x(t)=x^R(t)$ a.s. We thus obtain the bound
\begin{align*}
\Pnone\Big\{\sup_{0\leq t\leq T}|x(t)-u(t)|>\xi,\sigma^R\mi\sigma^R_m\geq T\Big\}&\leq \Pnone\Big\{\sup_{0\leq t\leq T}|x^R(t)-u^R(t)|>\xi\Big\}\to 0,\quad m\to 0,
\end{align*} 
where the last convergence in probability follows immediately from Proposition~\ref{prop:limit:zeromass:lipschitz}. We are left to estimate $\P{\sigma^R\mi\sigma^R_m<T}$. To this end, we have that
\begin{align*}
\P{\sigma^R\mi\sigma^R_m<T}&\leq \Pnone\Big\{\sup_{0\leq t\leq T}|x^R(t)-u^R(t)|\leq \xi,\sigma^R\mi\sigma^R_m < T\Big\}\\
&\qquad+\Pnone\Big\{\sup_{0\leq t\leq T}|x^R(t)-u^R(t)|> \xi\Big\}\\
&\leq  \Pnone\Big\{\sup_{0\leq t\leq T}|x^R(t)-u^R(t)|\leq\xi,\sigma^R_m < T\leq\sigma^R\Big\}+\P{\sigma^R< T}\\
&\qquad +\Pnone\Big\{\sup_{0\leq t\leq T}|x^R(t)-u^R(t)|> \xi\Big\}.
\end{align*}
Note that for $R>1$ and $\xi\in(0,1)$, a chain of event implications is derived as follows.
\begin{align*}
\MoveEqLeft[2]\Big\{\sup_{0\leq t\leq T}|x^R(t)-u^R(t)|\leq \xi,\sigma^R_m < T\leq\sigma^R\Big\}\\
&= \Big\{\sup_{0\leq t\leq T}|x^R(t)-u(t)|\leq \xi,\sup_{0\leq t\leq T}|x^R(t)|\geq R,\sigma^R_m < T\leq\sigma^R\Big\}\\
&\subseteq \Big\{\sup_{0\leq t\leq T}|u(t)|> R-1,\sigma^R_m < T\leq\sigma^R\Big\}\\
&\subseteq \{\sigma^{R-1}< T\} ,
\end{align*}
which implies that
\begin{align*}
\Pnone\Big\{\sup_{0\leq t\leq T}|x^R(t)-u^R(t)|<\xi,\sigma^R_m < T\leq\sigma^R\Big\}\leq \P{\sigma^{R-1}< T}.
\end{align*}
Finally, putting everything together, for $R>1>\xi>0$, $T,\,m>0$, we obtain the estimate
\begin{align*}
\MoveEqLeft[2]\Pnone\Big\{\sup_{0\leq t\leq T}|x(t)-u(t)|>\xi\Big\}\\&\leq 2\Pnone\Big\{\sup_{0\leq t\leq T}|x^R(t)-u^R(t)|>\xi\Big\}+\P{\sigma^{R-1}< T}+\P{\sigma^{R} < T}\\
&\leq 2\Pnone\Big\{\sup_{0\leq t\leq T}|x^R(t)-u^R(t)|>\xi\Big\}+\P{\tau^{R-1}< T}+\P{\tau^{R} < T}.
\end{align*}
By taking $R$ sufficiently large and then shrinking $m$ further to zero, we obtain the result, thus completing the proof.
\end{proof}

\section{White-noise limit} \label{sec:limit:whitenoise}
For notation simplicity, in this section, we shall omit the subscript $\epsilon$ in 
\begin{align*}
X_\epsilon(t)=(x_\epsilon(t),v_\epsilon(t),z_{1,\epsilon}(t),\dots).
\end{align*}

With regards to the well-posedness of~\eqref{eqn:GLE-Markov:limit:whitenoise}, recalling $c_k,\,\lambda_k$ from~\eqref{c-k}, we see that the noise term is well-defined thanks to Condition (D) of Assumption~\ref{cond:wellposed}, namely
\begin{align} \label{ineq:limit:whitenoise:1}
\Enone\Big(\int_0^T\sum_{k\geq 1}\sqrt{\frac{2c_k}{\lambda_k}} dW_k(t)\Big)^2=2T\sum_{k\geq 1} \frac{c_k}{\lambda_k}=2T\sum_{k\geq 1}\frac{1}{k^{1+(\alpha-1)\beta}}<\infty.
\end{align}
The solution $(u(t),p(t))$ of~\eqref{eqn:GLE-Markov:limit:whitenoise} then is constructed using similar arguments as in the proof of Proposition~\ref{prop:wellposed:limit:zeromass} in Section~\ref{sec:limit:zeromass} via stopping times $\tau^R$, $R>0$, given by
\begin{align}
\tau^R=\inf_{t\geq 0}\{u(t)^2+p(t)^2\geq R^2\},
\end{align}
and the local solutions
\begin{equation}\label{eqn:GLE-Markov:limit:whitenoise:local}
\begin{aligned}
d u^R(t) &= p^R(t)\, d t ,\\
m\, d p^R(t)&=\Big(\!\!-\Big(\gamma+\sum_{k\geq 1}\frac{c_k}{\lambda_k}\Big) p^R(t)-\Phi'(u^R(t))\theta_R(u^R(t))\Big)\,dt\\
&\qquad\qquad\qquad\qquad\qquad\quad+\sum_{k\geq 1}\sqrt{\frac{2c_k}{\lambda_k}}dW_k(t)+\sqrt{2\gamma}\, dW_0(t),
\end{aligned}	
\end{equation}
where $\theta^R$ is the cut-off function defined in~\eqref{defn:theta-R}. Furthermore, we have the following bound: for every $T>0$ and $(u_0,p_0)\in\rbb^2$, it holds that
\begin{align}\label{ineq:whitenoise:limit:bound}
\Enone\Big[\sup_{0\leq t\leq T}u(t)^2+p(t)^2\Big]\leq C(T,u_0,p_0).
\end{align}
This estimate will be useful later in the proof of Theorem~\ref{thm:limit:whitenoise:L1converge}. The solution $X_\epsilon(t)$ is constructed using the stopping time $\tau^R_\epsilon$ given by
\begin{align}
\tau^R_\epsilon=\inf_{t\geq 0}\{\|X(t)\|_{\Hs}\geq R\},
\end{align}
and the local solutions of the cut-off system obtained from~\eqref{eqn:GLE-Markov:whitenoise:epsilon} 
\begin{equation}\label{eqn:GLE-Markov:whitenoise:epsilon:local}
\begin{aligned}
d x^R(t) &= v^R(t)\, d t ,\\
m\, d v^R(t)&=\Big(\!\!-\gamma v^R(t)-\Phi'(x^R(t))\theta^R(x^R(t))-\sum_{k\geq 1} \sqrt{\frac{c_k}{\epsilon}} z^R_{k}(t)\Big)\,dt\\
&\qquad\qquad\qquad+\sqrt{2\gamma}\, dW_0(t), \\
d z^R_{k}(t)&=\Big(\!\!-\frac{\lambda_k}{\epsilon} z^R_{k}(t)+ \sqrt{\frac{c_k}{\epsilon}}v^R(t)\Big) \, dt+\sqrt{\frac{2\lambda_k}{\epsilon}}\, dW_k(t),\qquad k\geq 1.
\end{aligned}	
\end{equation}

We now turn to the proof of Theorem~\ref{thm:limit:whitenoise:probconverge}. Similar to the proof of Theorem~\ref{thm:limit:zeromass}, it will make use of the local solutions $(u^R(t),p^R(t))$ from~\eqref{eqn:GLE-Markov:limit:whitenoise:local} and $(x^R(t),v^R(t))$ from~\eqref{eqn:GLE-Markov:whitenoise:epsilon:local}. As mentioned previously in Section~\ref{sec:limit:zeromass}, the idea essentially consists of two major steps: first, fixing $R>0$, we show that the corresponding local solution $(x^R(t),v^R(t))$ in~\eqref{eqn:GLE-Markov:whitenoise:epsilon:local} converges to $(u^R(t),p^R(t))$ in~\eqref{eqn:GLE-Markov:limit:whitenoise:local}. Then, taking $R$ sufficiently large, we obtain the convergence in probability of the original solutions by using appropriate bounds on stopping times when $(u(t),p(t))$ exits the ball of radius $R$ centered at origin. We begin by the following important result giving a uniform bound on the pair $(x(t),v(t))$.

\begin{proposition} \label{prop:limit:whitenoise:L2bound} Suppose that $\alpha,\,\beta,\, s$ satisfy Condition (D) of Assumption~\ref{cond:wellposed}. We assume further that either 

(a) $\Phi'$ is globally Lipschitz,

\noindent or

(b) $\Phi'$ is not Lipschitz, but $\Phi$ satisfies Assumptions~\ref{cond:Phi} and Assumption~ \ref{cond:Phi:whitenoise}, and $\alpha,\,\beta$ satisfy Assumption~\ref{cond:whitenoise:L1converge}.

\noindent Let $X(t)$ solve~\eqref{eqn:GLE-Markov:whitenoise:epsilon} with initial conditions $(x(0),v(0),z_{1}(0),\dots)\in\Hs$. Then, for every $T>0$, there exists a finite constant $C(T,X(0))$ such that
\begin{displaymath}
\sup_{\epsilon} \Enone\Big[\sup_{0\leq t\leq T}x(t)^2+v(t)^2\Big] \leq C(T,X(0)).
\end{displaymath}
\end{proposition}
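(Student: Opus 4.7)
The plan is to apply Itô's formula to the physical energy $H(x,v) = \tfrac{m}{2}v^2 + \Phi(x)$ and exploit the positive-definiteness of the memory kernel $K_\epsilon$ to obtain a Lyapunov-type bound uniform in $\epsilon$. Since the $\Phi'(x)v\,dt$ contributions from $d\Phi(x)$ and $d(\tfrac{m}{2}v^2)$ cancel, a direct computation from~\eqref{eqn:GLE-Markov:whitenoise:epsilon} gives
\[
dH = -\gamma v^2\,dt + \frac{\gamma}{m}\,dt - v Z_\epsilon\,dt + v\sqrt{2\gamma}\,dW_0,\qquad Z_\epsilon(s):=\sum_{k\ge 1}\sqrt{c_k/\epsilon}\,z_k(s).
\]
Solving each $z_k$ equation by Duhamel and substituting yields the decomposition $Z_\epsilon(s) = \phi_\epsilon(s) + \int_0^s K_\epsilon(s-r)v(r)\,dr + B_\epsilon(s)$, where $\phi_\epsilon$ collects the initial-data terms and $B_\epsilon(s) = \sum_k \sqrt{2c_k\lambda_k}/\epsilon \int_0^s e^{-\lambda_k(s-r)/\epsilon}\,dW_k(r)$ is a sum of OU-type stochastic integrals. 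The convolution term is nonnegative by positive-definiteness of $K_\epsilon(|\cdot|)$: symmetrising over $[0,t]\times[0,t]$ gives $\int_0^t\!\int_0^s v(s)v(r)K_\epsilon(s-r)\,dr\,ds \ge 0$. Discarding it after integration in time produces
\[
H(t) + \gamma\!\int_0^t v^2\,ds \le H(0) + \frac{\gamma t}{m} - \int_0^t v\phi_\epsilon\,ds - \int_0^t vB_\epsilon\,ds + \int_0^t v\sqrt{2\gamma}\,dW_0.
\]
The term $\int_0^T\phi_\epsilon(s)^2\,ds$ is uniformly bounded in $\epsilon$ via Cauchy--Schwarz and the summability of $\sum_k c_k k^{2s}/\lambda_k$, which is exactly the strict inequality $2s<(\alpha-1)\beta$ of Condition~(D); the martingale $\int v\sqrt{2\gamma}\,dW_0$ is handled by BDG and Young.

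The delicate quantity is $\int_0^t vB_\epsilon\,ds$, for which a naive stochastic Fubini estimate produces a factor $1/\epsilon$. The key trick is to replace $B_\epsilon$ by its time antiderivative. Swapping the order of integration gives
\[
Y_\epsilon(t):=\int_0^t B_\epsilon(s)\,ds = \sum_{k\ge 1}\sqrt{2c_k/\lambda_k}\,W_k(t) - \sum_{k\ge 1}\frac{\epsilon}{\lambda_k}B_k^\epsilon(t) =: \mathcal{N}(t)-R_\epsilon(t),
\]
where $B_k^\epsilon(t)$ denotes the $k$-th summand of $B_\epsilon(t)$. Thus $Y_\epsilon$ has absolutely continuous paths with $Y_\epsilon'=B_\epsilon$. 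Since $v$ is driven only by $W_0$ whereas $Y_\epsilon$ involves $\{W_k\}_{k\ge 1}$, the cross-variation $\langle v,Y_\epsilon\rangle$ vanishes and classical integration by parts applies: $\int_0^t vB_\epsilon\,ds = v(t)Y_\epsilon(t)-\int_0^t Y_\epsilon\,dv$. Expanding $dv$ from~\eqref{eqn:GLE-Markov:whitenoise:epsilon} and re-using the Duhamel decomposition of $Z_\epsilon$ (together with the classical identity $\int_0^t Y_\epsilon B_\epsilon\,ds = \tfrac{1}{2}Y_\epsilon(t)^2$, valid because $Y_\epsilon$ has bounded variation) converts the delicate term into a sum of boundary, drift and martingale pieces involving $Y_\epsilon$ rather than $B_\epsilon$. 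Crucially, both $\mathcal{N}$ and $R_\epsilon$ admit uniform moment bounds in $\epsilon$: $\mathcal{N}$ by BDG since $\sum_k c_k/\lambda_k<\infty$, and $R_\epsilon$ via the interpolation $1-e^{-x}\le\min(1,x)$, which yields
\[
E|R_\epsilon(t)|^2 \le \sum_{k\ge 1}\min\!\Big(\tfrac{\epsilon c_k}{\lambda_k^2},\,\tfrac{2tc_k}{\lambda_k}\Big) \le \epsilon^\theta(2t)^{1-\theta}\!\!\sum_{k\ge 1}\frac{c_k}{\lambda_k^{1+\theta}} \le C\epsilon^\theta
\]
for any $\theta\in(0,\alpha-1)$ under Condition~(D); a Doob/BDG argument upgrades this to a uniform bound on $E\sup_{t\le T}|R_\epsilon(t)|^q$ for every $q\ge 1$.

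The two cases of the proposition are distinguished by how products involving $\Phi'(x)$, which arise from expanding $\int Y_\epsilon\,dv$, are controlled. In case~(a) the Lipschitz bound $|\Phi'(x)|\le C(1+|x|)\le C(1+\sqrt{H+1})$ lets every such term be absorbed into $C(1+H)$ and the estimate closes by Gronwall. In case~(b), Assumption~\ref{cond:Phi:whitenoise} controls products $\Phi'(x)Y_\epsilon$ by $c(\Phi(x)+|Y_\epsilon|^n+1)$, and the strengthened Assumption~\ref{cond:whitenoise:L1converge} ($\alpha>2$ and $(\alpha-2)\beta>1$) guarantees $\sum_k c_k/\lambda_k^2<\infty$, which is exactly what is needed to bound $E\sup_{s\le T}|Y_\epsilon(s)|^q$ uniformly in $\epsilon$ for every $q\ge 1$ by standard Gaussian moment estimates on $\mathcal{N}$ and $R_\epsilon$. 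After taking $\sup_{t\le T}$, expectation, applying BDG to the remaining martingale contributions, and closing via Gronwall on the resulting integral inequality for $E\sup_{s\le t}H(s)$, one obtains the desired bound, from which the control of $E\sup_{t\le T}x(t)^2$ follows via $x(t)=x(0)+\int_0^t v(s)\,ds$. The principal obstacle is precisely the term $\int vB_\epsilon\,ds$: although $B_\epsilon$ has $L^2$-norm of order $\epsilon^{-1/2}$, its primitive $Y_\epsilon=\mathcal{N}-R_\epsilon$ stays $O(1)$ uniformly, and this cancellation, together with the summability conditions of (D) and, in case~(b), Assumption~\ref{cond:whitenoise:L1converge}, is what makes the whole scheme work.
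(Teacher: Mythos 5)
Your strategy is genuinely different from the paper's, and the core ideas are sound, but two points need attention. The paper never invokes positive-definiteness of $K_\epsilon$: in case (a) it works entirely with the integral equation for $v$ (no Lyapunov function at all), applying the integration-by-parts identity~\eqref{eqn:limit:whitenoise:integrationbypart} (your $Y_\epsilon=\mathcal{N}-R_\epsilon$) and Lemma~\ref{lem:limit:zeromass} directly, then Gronwall; in case (b) it applies It\^o to the \emph{shifted} energy $\tfrac12\bigl(v-\tfrac1m\sum_k\sqrt{c_k/\lambda_k}\,w_k\bigr)^2+\Phi(x)/m$, which cancels the dangerous OU noise at the level of the SDE for the shifted velocity, and bounds the memory integral crudely via $g_k(r)\le\sup|v|\cdot\epsilon/\lambda_k$. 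Your route --- physical energy, dropping the convolutional dissipation by positive-definiteness of $K_\epsilon(|\cdot|)$, then a second Stieltjes integration by parts $\int_0^t v\,B_\epsilon\,ds=v(t)Y_\epsilon(t)-\int_0^tY_\epsilon\,dv$ against the bounded-variation primitive $Y_\epsilon$ --- is a legitimate alternative and arguably more transparent, since it exposes the dissipation structure and makes the $O(1)$ size of $Y_\epsilon$ the central object. What each buys: the paper's shift keeps everything within a single It\^o computation, while yours localizes the ``hard'' noise into one boundary/drift expansion and can discard a sign-definite term.

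Two concrete gaps. First, your case (a) invokes $|x|\le C\sqrt{H+1}$, i.e.\ $x^2\lesssim\Phi(x)+v^2+1$, but Proposition~\ref{prop:limit:whitenoise:L2bound}(a) only assumes $\Phi'$ globally Lipschitz, not Assumption~\ref{cond:Phi}; $\Phi$ need not dominate $x^2$ and may even be unbounded below, so $H$ is not a valid Lyapunov function there. This is repairable (add $Cx^2$ to the energy, or follow the paper and work directly with the integral equation for $v$), but as stated the step does not close. Second, the claim that a ``Doob/BDG argument'' upgrades $E|R_\epsilon(t)|^2\le C\epsilon^\theta$ to $E\sup_{t\le T}|R_\epsilon(t)|^q\le C$ is too quick: $R_\epsilon$ is a sum of Ornstein--Uhlenbeck processes, not a martingale, and the cheap bound $|R_\epsilon|\le|\mathcal{N}|+|Y_\epsilon|$ is circular. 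The correct tool is precisely Lemma~\ref{lem:limit:zeromass} (Blount's maximal estimate for OU processes), combined with H\"older over $k$ as in~\eqref{ineq:limit:whitenoise:4}; this is where Condition (D) gives the $q$ near $1$ needed for case (a), and where Assumption~\ref{cond:whitenoise:L1converge} ($(\alpha-2)\beta>1$) becomes necessary to reach arbitrarily high moments $q$ in case (b). With those two repairs, your argument goes through.
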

\begin{proof} We begin by applying Duhamel's formula on $z_k(t)$ from~\eqref{eqn:GLE-Markov:whitenoise:epsilon} to see that
\begin{align}\label{eqn:limit:whitenoise:1}
z_k(t)=e^{-\frac{\lambda_k}{\epsilon}t}z_k(0)+\sqrt{\frac{c_k}{\epsilon}}\int_0^t e^{-\frac{\lambda_k}{\epsilon}(t-r)}v(r)dr+\sqrt{\frac{2\lambda_k}{\epsilon}}\int_0^t e^{-\frac{\lambda_k}{\epsilon}(t-r)}dW_k(r).
\end{align}
Substituting $z_k$ by the formula above in the second equation from~\eqref{eqn:GLE-Markov:whitenoise:epsilon} in integral form, we obtain
\begin{equation}\label{eqn:limit:whitenoise:1a}
\begin{aligned}
mv(t)&=mv(0)+\int_0^t\!\!-\gamma v(r)-\Phi'(x(r))dr+\sqrt{2\gamma}\int_0^tdW_0(r)\\
&\qquad-\sum_{k\geq 1}\sqrt{\frac{c_k}{\epsilon}} \int_0^te^{-\frac{\lambda_k}{\epsilon}r}z_k(0)dr-\sum_{k\geq 1}\frac{c_k}{\epsilon}\int_0^t\int_0^r e^{-\frac{\lambda_k}{\epsilon}(r-\ell)}v(\ell)d\ell dr\\
&\qquad-\sum_{k\geq 1}\frac{\sqrt{2c_k\lambda_k}}{\epsilon}\int_0^t\int_0^r e^{-\frac{\lambda_k}{\epsilon}(r-\ell)}dW_k(\ell)dr.
\end{aligned}
\end{equation}
It is important to note that using integration by parts, the last noise term above can be written as
\begin{multline}\label{eqn:limit:whitenoise:integrationbypart}
-\frac{\sqrt{2c_k\lambda_k}}{\epsilon}\int_0^t\int_0^r e^{-\frac{\lambda_k}{\epsilon}(r-\ell)}dW_k(\ell)dr\\= \sqrt{\frac{2c_k}{\lambda_k}}\int_0^te^{-\frac{\lambda_k}{\epsilon}(t-r)}dW_k(r)-\sqrt{\frac{2c_k}{\lambda_k}}\int_0^tdW_k(r).
\end{multline}

Suppose that Condition (\emph{a}) holds, i.e., $\Phi'$ is Lipschitz. In view of~\eqref{eqn:limit:whitenoise:1a} and~\eqref{eqn:limit:whitenoise:integrationbypart}, we have the following estimate for every $q>1$ and $0\leq t\leq T$,
\begin{equation*}\label{ineq:limit:whitenoise:2}
\begin{aligned}
v(t)^{2q}
&\leq c(q)\bigg[ |v(0)|^{2q}+\int_0^T\!\!\sup_{0\leq r\leq t}v(r)^{2q}+x(r)^{2q}dr+ \Big(\sum_{k\geq 1}\frac{\sqrt{\epsilon c_k}}{\lambda_k}(1-e^{-\frac{\lambda_k}{\epsilon}t})|z_k(0)|\Big)^{2q}\\
&\qquad+ c(T)\Big(\sum_{k\geq 1}\frac{c_k}{\lambda_k}\Big)^{2q}\int_0^T \sup_{0\leq \ell\leq r}v(\ell)^{2q}dr\\
&\qquad+\sup_{0\leq t\leq T}\Big(\sqrt{2\gamma}\int_0^tdW_0(r)-\sum_{k\geq 1}\sqrt{\frac{2c_k}{\lambda_k}}\int_0^tdW_k(r)\Big)^{2q}\\
&\qquad+\Big(\sum_{k\geq 1}k^{-sq*}\Big)^{2q/q*}\sum_{k\geq 1}\sup_{0\leq t\leq T}\Big|\sqrt{\frac{2c_k k^{2s}}{\lambda_k}}\int_0^te^{-\frac{\lambda_k}{\epsilon}(t-r)}dW_k(r)\Big|^{2q}\bigg],
\end{aligned}
\end{equation*}
where in the last line, we have used Holder's inequality with $\frac{1}{2q}+\frac{1}{q*}=1$. Note that for every $x\geq 0$, we have $1-e^{-x}\leq \sqrt{x}$. Using this inequality, we estimate 
\begin{equation}\label{ineq:limit:whitenoise:2a}
\begin{aligned}
\sum_{k\geq 1}\frac{\sqrt{\epsilon c_k}}{\lambda_k}(1-e^{-\frac{\lambda_k}{\epsilon}t})|z_k(0)|&\leq \sum_{k\geq 1}\sqrt{\frac{ c_k t}{\lambda_k}}|z_k(0)|\\
&\leq\Big(T\sum_{k\geq 1}\frac{c_k k^{2s}}{\lambda_k}\sum_{k\geq 1}k^{-2s}z_k(0)^2\Big)^{1/2}.
\end{aligned}
\end{equation}
Recalling $c_k,\,\lambda_k$ from~\eqref{c-k} and the norm $\|\cdot\|_{\Hs}$ from~\eqref{eqn:H_p}, thanks to Condition (D) of Assumption~\ref{cond:wellposed}, we see that the above RHS is finite and so is the sum $\sum_{k\geq 1}c_k/\lambda_k$. In addition, using Burkholder-Davis-Gundy's inequality, we have
\begin{equation} \label{ineq:limit:whitenoise:2b}
\begin{aligned}
\MoveEqLeft[5]\Enone\sup_{0\leq t\leq T}\Big(\sqrt{2\gamma}\int_0^tdW_0(r)-\sum_{k\geq 1}\sqrt{\frac{2c_k}{\lambda_k}}\int_0^tdW_k(r)\Big)^{2q}\\
&\leq c(q)\Enone\Big(2\gamma\int_0^Tdr+\sum_{k\geq 1}\frac{2c_k}{\lambda_k}\int_0^Tdr\Big)^{q}=c(T,q)<\infty.
\end{aligned}
\end{equation}
Finally, we invoke Lemma~\ref{lem:limit:zeromass} again to find
\begin{equation}\label{ineq:limit:whitenoise:4}
\begin{aligned}
\MoveEqLeft[3]
\Enone\sum_{k\geq 1}\sup_{0\leq t\leq T}\Big|\sqrt{\frac{2c_k k^{2s}}{\lambda_k}}\int_0^te^{-\frac{\lambda_k}{\epsilon}(t-r)}dW_k(r)\Big|^{2q}\\
&\leq c(T,q)\sum_{l\geq 1}\frac{\epsilon^{q-1}c_k^q k^{2sq}}{\lambda_k^{2q-1}}\\
&=c(T,q)\epsilon^{q-1}\sum_{k\geq 1}\frac{1}{k^{q+(q\alpha-2q+1)\beta-2sq}}.
\end{aligned}
\end{equation}
Note that for $\alpha>1$, $s>1/2$ and $\beta>0$ satisfying Condition (D) of Assumption~\ref{cond:wellposed}, there exist constants $q>1$ and $0<q*<2$ such that 
\begin{align*}
q+(q\alpha-2q+1)\beta-2sq>1,\quad sq*>1,\quad\text{and}\quad\frac{1}{2q}+\frac{1}{q*}=1.
\end{align*}
Consequently, the sums $\sum_{k\geq 1}k^{-[q+(q\alpha-2q+1)\beta-2sq]}$ and $\sum_{k\geq 1}k^{-sq*}$ are both finite. Combining everything together, we infer
\begin{align*}
\Enone\big(\sup_{0\leq t\leq T}x(t)^{2q}+v(t)^{2q}\big)&\leq c(T,q,X(0))\Big[1+\int_0^T\Enone\big(\sup_{0\leq r\leq t}x(r)^{2q}+v(r)^{2q}\big) dt\Big].
\end{align*}
 Choosing such $q$, we finally obtain the following estimate using Gronwall's inequality
\begin{align}\label{ineq:limit:whitenoise:5}
\Enone\big(\sup_{0\leq t\leq T}x(t)^{2q}+v(t)^{2q}\big)&\leq c(T,q,X(0)),
\end{align}
which proves the result for Condition (\emph{a}) since $q>1$. 

Now suppose that Condition (\emph{b}) holds. To simplify notation, we set
\begin{align*}
g_k(t):=\int_0^t e^{-\frac{\lambda_k}{\epsilon}(t-r)}v(r)dr,\quad\text{and}\quad w_k(t):= \sqrt{2}\int_0^te^{-\frac{\lambda_k}{\epsilon}(t-r)}dW_k(r).
\end{align*}
Following ~\eqref{eqn:limit:whitenoise:1a} and~\eqref{eqn:limit:whitenoise:integrationbypart}, the equation on $v(t)$ is written as
\begin{align*}
d\Big(mv(t)-\sum_{k\geq 1}\sqrt{\frac{c_k}{\lambda_k}}w_k(t)\Big)& = \Big(-\gamma v(t)-\Phi'(x(t))-\sum_{k\geq 1}\sqrt{\frac{c_k}{\epsilon}}e^{-\frac{\lambda_k}{\epsilon}t}z_k(0)-\sum_{k\geq 1}\frac{c_k}{\epsilon}g_k(t)\Big)dt\\
&\qquad+\sqrt{2\gamma}dW_0(t)-\sum_{k\geq 1}\sqrt{ \frac{2c_k}{\lambda_k}}dW_k(t).
\end{align*}
We apply Ito's formula to $\big(v(t)-\frac{1}{m}\sum_{k\geq 1}\sqrt{\frac{c_k}{\lambda_k}}w_k(t)\big)^2/2+\Phi(x(t))/m$ to see that
\begin{align*}
\MoveEqLeft[1]d\Big[\Big(v(t)-\frac{1}{m}\sum_{k\geq 1}\sqrt{\frac{c_k}{\lambda_k}}w_k(t)\Big)^2/2+\Phi(x(t))/m\Big]\\
&=\Big(v(t)-\frac{1}{m}\sum_{k\geq 1}\sqrt{\frac{c_k}{\lambda_k}}w_k(t)\Big)\Big(-\frac{\gamma}{m} v(t)-\sum_{k\geq 1}\sqrt{\frac{c_k}{m^2\epsilon}}e^{-\frac{\lambda_k}{\epsilon}t}z_k(0)-\sum_{k\geq 1}\frac{c_k}{m\epsilon}g_k(t)\Big)dt\\
&\qquad+\Big(v(t)-\frac{1}{m}\sum_{k\geq 1}\sqrt{\frac{c_k}{\lambda_k}}w_k(t)\Big)\Big(\frac{\sqrt{2\gamma}}{m}dW_0(t)-\sum_{k\geq 1}\sqrt{ \frac{2c_k}{m^2\lambda_k}}dW_k(t)\Big)\\
&\qquad +\Big(\frac{\Phi'(x(t))}{m}\sum_{k\geq 1}\sqrt{\frac{c_k}{\lambda_k}}w_k(t)+\frac{\gamma}{m^2}+\sum_{k\geq 1}\frac{c_k}{\lambda_k}\Big) dt.
\end{align*}
We proceed to estimate the above RHS. Firstly, we invoke estimate~\eqref{ineq:limit:whitenoise:2a} to find
\begin{align*}
\MoveEqLeft[4]\int_0^t \Big(v(r)-\frac{1}{m}\sum_{k\geq 1}\sqrt{\frac{c_k}{\lambda_k}}w_k(r)\Big)\Big(-\sum_{k\geq 1}\sqrt{\frac{c_k}{m^2\epsilon}}e^{-\frac{\lambda_k}{\epsilon}r}z_k\Big) dr \\
&\leq \sup_{0\leq r\leq t} \Big|v(r)-\frac{1}{m}\sum_{k\geq 1}\sqrt{\frac{c_k}{\lambda_k}}w_k(r)\Big|\,\sum_{k\geq 1}\frac{\sqrt{\epsilon c_k}}{\lambda_k}(1-e^{-\frac{\lambda_k}{\epsilon}t})|z_k|\\
&\leq \sup_{0\leq r\leq t} \Big|v(r)-\frac{1}{m}\sum_{k\geq 1}\sqrt{\frac{c_k}{\lambda_k}}w_k(r)\Big|\,\Big(\sum_{k\geq 1}\frac{c_k k^{2s}}{\lambda_k}\sum_{k\geq 1}k^{-2s}z_k^2\Big)^{1/2}\\
&\leq \frac{1}{2}\sup_{0\leq r\leq t} \Big|v(r)-\frac{1}{m}\sum_{k\geq 1}\sqrt{\frac{c_k}{\lambda_k}}w_k(r)\Big|^2+2\sum_{k\geq 1}\frac{c_k k^{2s}}{\lambda_k}\sum_{k\geq 1}k^{-2s}z_k^2.
\end{align*}
Similarly, we have
\begin{align*}
g_k(r)=\int_0^r e^{-\frac{\lambda_k}{\epsilon}(r-\ell)}v(\ell)d\ell\leq\sup_{0\leq \ell\leq r}|v(\ell)|\frac{\epsilon}{\lambda_k},
\end{align*}
which implies that
\begin{align*}
\MoveEqLeft[4]\int_0^t\Big(v(r)-\frac{1}{m}\sum_{k\geq 1}\sqrt{\frac{c_k}{\lambda_k}}w_k(r)\Big)\Big(-\sum_{k\geq 1}\frac{c_k}{m\epsilon}g_k(r)\Big)dr\\
&\leq c\Big( \sum_{k\geq 1}\frac{c_k}{\lambda_k}\Big)\int_0^t\sup_{0\leq \ell\leq r }v(\ell)^2+\sup_{0\leq \ell\leq r }\Big(\sum_{k\geq 1}\sqrt{\frac{c_k}{\lambda_k}}w_k(\ell)\Big)^2dr.
\end{align*}
With regard to the martingale term, we invoke Burkholder-Davis-Gundy's inequality to estimate
\begin{align*}
\MoveEqLeft[4]\Enone\sup_{0\leq r\leq t }\Big|\int_0^r\Big(v(\ell)-\frac{1}{m}\sum_{k\geq 1}\sqrt{\frac{c_k}{\lambda_k}}w_k(\ell)\Big)\Big(\frac{\sqrt{2\gamma}}{m}dW_0(\ell)-\sum_{k\geq 1}\sqrt{ \frac{2c_k}{m^2\lambda_k}}dW_k(\ell)\Big)\Big|\\
&\leq c\Big[\Big(\frac{2\gamma}{m^2}+\sum_{k\geq 1}\frac{2c_k}{m^2\lambda_k}\Big)\int_0^t\Enone\Big(v(r)-\frac{1}{m}\sum_{k\geq 1}\sqrt{\frac{c_k}{\lambda_k}}w_k(r)\Big)^2dr+1\Big].
\end{align*}
Lastly, we employ Assumption~\ref{cond:Phi:whitenoise} to infer
\begin{align*}
\int_0^t \Phi'(x(r))\sum_{k\geq 1}\sqrt{\frac{c_k}{\lambda_k}}w_k(r)dr&\leq c\int_0^t\Phi(x(r))+\Big(\sum_{k\geq 1}\sqrt{\frac{c_k}{\lambda_k}}w_k(r)\Big)^n+1\, dr.
\end{align*}
Putting everything together, we arive at the following inequality
\begin{align*}
\Enone\sup_{0\leq t\leq T } v(t)^2+\Phi(x(t))& \leq c(T)\Big[1+\int_0^T\Enone\sup_{0\leq r\leq t } v(r)^2+\Phi(x(r))\,dt\\
&\quad+\Enone\sup_{0\leq t\leq T}\Big(\sum_{k\geq 1}\sqrt{\frac{c_k}{\lambda_k}}w_k(r)\Big)^2  +\Enone\sup_{0\leq t\leq T}\Big(\sum_{k\geq 1}\sqrt{\frac{c_k}{\lambda_k}}w_k(r)\Big)^n\Big].
\end{align*}
The result now follows immediately from Gronwall's inequality if we can show that the last two terms on the above RHS is finite and independent of $\epsilon$. To this end, we claim that for every $T>0$ and $q>2$, there exists a finite constant $C(T,q)>0$ such that
\begin{equation}\label{ineq:limit:whitenoise:6}
\Enone\sup_{0\leq t\leq T}\Big(\sum_{k\geq 1}\sqrt{\frac{c_k}{\lambda_k}}w_k(r)\Big)^{2q} \leq c(T,q).
\end{equation}
Recalling $w_k(t):= \sqrt{2}\int_0^te^{-\frac{\lambda_k}{\epsilon}(t-r)}dW_k(r)$, similar to \eqref{ineq:limit:whitenoise:4}, we employ Holder's inequality and Lemma~\ref{lem:limit:zeromass} to see that
\begin{align*}
\MoveEqLeft[4]\Enone\sup_{0\leq t\leq T}\Big(\sum_{k\geq 1}\sqrt{\frac{c_k}{\lambda_k}}w_k(r)\Big)^{2q}\\
&\leq\Big(\sum_{k\geq 1}k^{-q_1q*}\Big)^{2q/q*}\sum_{k\geq 1}\Enone\bigg[\sup_{0\leq t\leq T}\Big|\sqrt{\frac{c_k k^{2q_1}}{\lambda_k}}w_k(t)\Big|^{2q}\bigg]\\
&=c(T,q)\Big(\sum_{k\geq 1}k^{-q_1q*}\Big)^{2q/q*}\epsilon^{q-1}\sum_{k\geq 1}\frac{1}{k^{q+(q\alpha-2q+1)\beta-2q_1q}}.
\end{align*}
where $\frac{1}{2q}+\frac{1}{q*}=1$ and $q_1>0$ is a constant satisfying
\begin{align*}
q_1q*>1\quad\text{and}\quad q+(q\alpha-2q+1)\beta-2q_1q>1.
\end{align*}
Solving the above inequalities for $q_1$, we find
\begin{align*}
\frac{1+(\alpha-2)\beta}{2}+\frac{\beta}{2q}-\frac{1}{2q}>q_1>1-\frac{1}{2q},
\end{align*}
which is always possible thanks to the second part of Condition (b), namely, $(\alpha-2)\beta>1$. The proof is thus complete.
\end{proof}

\begin{remark} \label{ref:whitenoise} (a) The trick of using integration by part in~\eqref{eqn:limit:whitenoise:integrationbypart} was previously employed in ~\cite{hottovy2015smoluchowski,ottobre2011asymptotic}.

(b) The condition $\alpha>1$ of the diffusive regime was employed throughout the proof of Proposition~\ref{prop:limit:whitenoise:L2bound}, e.g. estimates~\eqref{ineq:limit:whitenoise:2a},~\eqref{ineq:limit:whitenoise:2b} and ~\eqref{ineq:limit:whitenoise:4}.
\end{remark}

%
%

\begin{proposition}\label{prop:limit:whitenoise:lipschitz} Under the same Hypothesis of Theorem~\ref{thm:limit:whitenoise:probconverge}, assume further that $\Phi'(x)$ is globally Lipschitz. Let $X_\epsilon(t)=(x_\epsilon(t),v_\epsilon(t),z_{1,\epsilon}(t),\dots)$ be the solution of~\eqref{eqn:GLE-Markov:whitenoise:epsilon} with initial conditions 
\begin{align*}(x_\epsilon(0),v_\epsilon(0),z_{1,\epsilon}(0),z_{2,\epsilon}(0),\dots)=(x,v,z_{1},z_2,\dots)\in\Hs,
\end{align*}
 and $(u(t),p(t))$ be the solution of~\eqref{eqn:GLE-Markov:limit:whitenoise} with initial conditions $(u(0),p(0))=(x,v)$. Then, for every $T>0$,
\begin{equation*}
\Enone\Big[\sup_{0\leq t\leq T}|x(t)-u(t)|^2+\sup_{0\leq t\leq T}|v(t)-p(t)|^2\Big]\to 0, \quad \epsilon\to 0.
\end{equation*}
\end{proposition}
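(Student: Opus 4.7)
The plan is to reduce the claimed $L^2$ convergence to a single residual estimate. Solving $z_k$ from~\eqref{eqn:GLE-Markov:whitenoise:epsilon} by Duhamel as in~\eqref{eqn:limit:whitenoise:1} and substituting into~\eqref{eqn:limit:whitenoise:1a}, I would apply Fubini to rewrite
\begin{equation*}
\sum_{k\geq 1}\frac{c_k}{\epsilon}\int_0^t\!\!\int_0^r e^{-\lambda_k(r-\ell)/\epsilon}v(\ell)\,d\ell\,dr = A\int_0^t v(r)\,dr - R_{\epsilon,2}(t),
\end{equation*}
where $A:=\sum_{k\geq 1}c_k/\lambda_k<\infty$ under Condition~(D) and $R_{\epsilon,2}(t):=\sum_{k\geq 1}\tfrac{c_k}{\lambda_k}\int_0^t v(\ell)e^{-\lambda_k(t-\ell)/\epsilon}d\ell$. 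Combined with the integration-by-parts identity~\eqref{eqn:limit:whitenoise:integrationbypart}, which splits the double stochastic integral into $-\sum_{k\geq 1}\sqrt{2c_k/\lambda_k}\,W_k(t)$ plus the transient $R_{\epsilon,3}(t):=\sum_{k\geq 1}\sqrt{2c_k/\lambda_k}\int_0^t e^{-\lambda_k(t-r)/\epsilon}dW_k(r)$, the $v$-equation matches the integrated form of~\eqref{eqn:GLE-Markov:limit:whitenoise} term by term. Subtracting and using $v(0)=p(0)$ produces the clean identity
\begin{equation*}
m\vbar(t) + (\gamma+A)\xbar(t) + \int_0^t[\Phi'(x(r)) - \Phi'(u(r))]\,dr = R_\epsilon(t),
\end{equation*}
where $\xbar = x-u$, $\vbar = v-p$, $R_\epsilon = R_{\epsilon,1}+R_{\epsilon,2}+R_{\epsilon,3}$ and $R_{\epsilon,1}(t):=-\sum_{k\geq 1}\tfrac{\sqrt{\epsilon c_k}}{\lambda_k}(1-e^{-\lambda_k t/\epsilon})z_k(0)$.

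Next, since $\dot{\xbar}=\vbar$ and $\xbar(0)=0$, the identity is a linear scalar ODE $m\dot{\xbar} + (\gamma+A)\xbar = h_\epsilon(t)$ with forcing $h_\epsilon(t) := R_\epsilon(t) - \int_0^t[\Phi'(x) - \Phi'(u)]\,dr$. Duhamel's formula, the Lipschitz bound $|\Phi'(x)-\Phi'(u)|\leq L|\xbar|$, and Fubini on the resulting double integral together give
\begin{equation*}
\sup_{t\leq T}|\xbar(t)| \leq \frac{L}{\gamma+A}\int_0^T\!\sup_{r\leq s}|\xbar(r)|\,ds + \frac{1}{\gamma+A}\sup_{t\leq T}|R_\epsilon(t)|,
\end{equation*}
so Gronwall yields $\sup_{t\leq T}\xbar(t)^2 \leq C(T)\sup_{t\leq T}R_\epsilon(t)^2$ pathwise, and solving the identity for $m\vbar$ gives the analogous pointwise bound on $\sup_{t\leq T}\vbar(t)^2$ (with $m$ fixed, the constants are harmless). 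The proposition therefore reduces to showing $\Enone\sup_{t\leq T}R_\epsilon(t)^2\to 0$ as $\epsilon\to 0$.

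For the stochastic piece $R_{\epsilon,3}$, I would invoke Hölder with exponents $\tfrac{1}{2q}+\tfrac{1}{q^*}=1$ and Lemma~\ref{lem:limit:zeromass} exactly as in the proof of Proposition~\ref{prop:limit:whitenoise:L2bound}, choosing $q>1$ close to $1$ so that Condition~(D) makes both $\sum_{k\geq 1}k^{-sq^*}$ and $\sum_{k\geq 1}k^{-(q+(q\alpha-2q+1)\beta-2sq)}$ convergent, giving $\Enone\sup_{t\leq T}R_{\epsilon,3}(t)^{2q}\leq c(T,q)\epsilon^{q-1}\to 0$. For the deterministic residuals $R_{\epsilon,1}$ and $R_{\epsilon,2}$ the naive Cauchy-Schwarz bound produces divergent tails such as $\sum_{k\geq 1}c_k/\lambda_k^2$, so I would instead dyadically split at $K_\epsilon:=(T/\epsilon)^{1/\beta}$, using $(1-e^{-\lambda_k t/\epsilon})\leq 1$ for $k\leq K_\epsilon$ and $(1-e^{-\lambda_k t/\epsilon})\leq \lambda_k t/\epsilon$ for $k>K_\epsilon$. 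Cauchy-Schwarz against $(\sum_k k^{-2s}z_k(0)^2)^{1/2}$ for $R_{\epsilon,1}$ and against the $\epsilon$-uniform bound on $\Enone\sup_{t\leq T}v(t)^2$ from Proposition~\ref{prop:limit:whitenoise:L2bound}(a) for $R_{\epsilon,2}$ then yield polynomial decay of order $\epsilon^{((\alpha-1)\beta-2s)/(2\beta)}$ and $\epsilon^{\alpha-1}$ respectively, both strictly positive under Condition~(D).

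The principal obstacle is this dyadic estimate for $R_{\epsilon,2}$: the series $\sum_{k\geq 1}c_k/\lambda_k^2=\sum_{k\geq 1}k^{2\beta-1-\alpha\beta}$ diverges whenever $\alpha\leq 2$, which is the very regime Condition~(D) allows but Assumption~\ref{cond:whitenoise:L1converge} rules out; the two truncated partial sums on either side of $K_\epsilon$ grow as competing powers of $K_\epsilon\sim \epsilon^{-1/\beta}$ which combine with the explicit $\epsilon$ prefactors to leave a net decay $\epsilon^{\alpha-1}$. The uniform bound from Proposition~\ref{prop:limit:whitenoise:L2bound} is an essential input, since without it the $\sup|v|$ factor in the $R_{\epsilon,2}$ estimate could depend on $\epsilon$ and annihilate this cancellation.
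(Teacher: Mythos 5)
Your proposal is correct and follows the same overall strategy as the paper's own proof: subtract the two equations, use the integration-by-parts identity~\eqref{eqn:limit:whitenoise:integrationbypart} to cancel the Brownian increments, observe that the residual is exactly $\sqrt{\epsilon}\sum_{k\geq 1}\tfrac{\sqrt{c_k}}{\lambda_k}(z_k(t)-z_k(0))$, obtain a pathwise bound via Lipschitz $+$ Gronwall, and then estimate the three pieces of the residual (the paper's display~\eqref{ineq:limit:whitenoise:3} is precisely your $R_{\epsilon,1},R_{\epsilon,2},R_{\epsilon,3}$, with the stochastic piece handled by Lemma~\ref{lem:limit:zeromass} and the $v$-dependent piece by Proposition~\ref{prop:limit:whitenoise:L2bound}(a)). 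The one place you genuinely diverge is the treatment of the deterministic residuals $R_{\epsilon,1}$ and $R_{\epsilon,2}$: you split the $k$-sum dyadically at $K_\epsilon=(T/\epsilon)^{1/\beta}$, using the trivial bound $1-e^{-x}\leq 1$ below the threshold and $1-e^{-x}\leq x$ above, whereas the paper (in the proof of Proposition~\ref{prop:limit:whitenoise:lipschitz:1}) interpolates smoothly via $1-e^{-x}\leq c(q)x^q$ with $q\in(0,1/2)$, resp.\ $q\in(0,1)$, and then chooses $q$ to make the full $k$-series converge. Both techniques are standard interpolation devices and deliver the same polynomial rates, $\epsilon^{((\alpha-1)\beta-2s)/(2\beta)}$ for $R_{\epsilon,1}$ and $\epsilon^{\alpha-1}$ for $R_{\epsilon,2}$, positive under Condition~(D). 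Your remark that a naive Cauchy--Schwarz bound produces the divergent tail $\sum_k c_k/\lambda_k^2$ is accurate and is exactly why the interpolation step is necessary; you also correctly flag that the $\epsilon$-uniform moment bound from Proposition~\ref{prop:limit:whitenoise:L2bound} is the indispensable input allowing $\sup|v|$ to be pulled out of the $R_{\epsilon,2}$ estimate. One small point worth making explicit: to pass from $\Enone\sup_t R_{\epsilon,3}^{2q}\leq c(T,q)\epsilon^{q-1}$ with $q>1$ to the desired $\Enone\sup_t R_{\epsilon,3}^{2}\to 0$ you should invoke Jensen, giving $\Enone\sup_t R_{\epsilon,3}^{2}\leq\big(\Enone\sup_t R_{\epsilon,3}^{2q}\big)^{1/q}\leq c\,\epsilon^{(q-1)/q}$.
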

The proof of Proposition~\ref{prop:limit:whitenoise:lipschitz} is based on that of Theorem 2.6 in~\cite{ottobre2011asymptotic}, adapted to our infinite-dimensional setting. 
\begin{proof} Setting $\xbar(t) := x(t)-u(t),\,\vbar(t):=v(t)-p(t)$, we see that from\eqref{eqn:GLE-Markov:whitenoise:epsilon}, \eqref{eqn:GLE-Markov:limit:whitenoise}, $(\xbar(t),\vbar(t))$ satisfies the following system
\begin{align*}
 \xbar(t) &=\int_0^t \!\!\vbar(r)\, d r, \\
m \vbar(t)&=\int_0^t\!\!\Big(\!\!-\gamma\vbar(r)+\Big(\sum_{k\geq 1}\frac{c_k}{\lambda_k}\Big) p(r)-\big[\Phi'(x(r))-\Phi'(u(r))\big]-\sum_{k\geq 1}\sqrt{\frac{c_k}{\epsilon}}z_k(r)\Big)\,dr\\
&\qquad+\int_0^t\sum_{k\geq 1}\sqrt{\frac{2c_k}{\lambda_k}}dW_k(r).
\end{align*}
with the initial conditions $(\xbar(0),\vbar(0))=(0,0)$. Regrading $z_k(t)$ terms, we integrate with respect to time the third equation in~\eqref{eqn:GLE-Markov:whitenoise:epsilon} to find that
\begin{align*}
\frac{\sqrt{\epsilon c_k}}{\lambda_k} (z_k(t)-z_k(0)) -\frac{c_k}{\lambda_k}\int_0^tv(r)dr-\sqrt{\frac{ c_k}{\lambda_k}}\int_0^tdW_k(r)=-\sqrt{\frac{c_k}{\epsilon}}\int_0^tz_k(r).
\end{align*}
With these observations, the system of integral equations on $(\xbar(t),\vbar(t))$ becomes
\begin{equation} \label{eqn:limit:whitenoise:lipschitz:1}
\begin{aligned}
 \xbar(t) &=\int_0^t\!\! \vbar(r)\, d r, \\
m \vbar(t)&=\int_0^t\!\!\Big(\!\!-\Big(\gamma+\sum_{k\geq 1}\frac{c_k}{\lambda_k}\Big) \vbar(r)-\big[\Phi'(x(r))-\Phi'(u(r))\big]\Big)\,dr\\
&\qquad\qquad\qquad+\sqrt{\epsilon}\sum_{k\geq 1}\frac{\sqrt{c_k}}{\lambda_k}(z_k(t)-z_k(0)).
\end{aligned}
\end{equation}
In the above system, we have implicitly re-arranged infinitely many terms, resulting in the cancellation of noise terms. Recalling $c_k,\,\lambda_k$ from~\eqref{c-k} and the norm $\|\cdot\|_{\Hs}$ from~\eqref{eqn:H_p}, this re-arrangement is possible following from~\eqref{ineq:limit:whitenoise:1} and the estimate
\begin{align*}
\sum_{k\geq 1}\Big|\frac{\sqrt{\epsilon c_k}}{\lambda_k}(z_k(t)-z_k(0))\Big|&\leq \sum_{k\geq 1}\frac{c_k}{\lambda_k}\int_0^t |v(r)|dr+\sum_{k\geq 1}+\frac{1}{\sqrt{\epsilon}}\int_0^t\sum_{k\geq 1}\sqrt{c_k}|z_k(r)|dr\\
&\qquad+\sum_{k\geq 1}\sqrt{\frac{ c_k}{\lambda_k}}\big|\int_0^tdW_k(r)\big|\\
&<\infty, \text{ a.s.}
\end{align*}
thanks to condition (D) of Assumption~\ref{cond:wellposed}. We invoke the assumption that $\Phi'$ is globally Lipschitz and Gronwall's inequality to deduce from~\eqref{eqn:limit:whitenoise:lipschitz:1}
\begin{align*}
\Enone\sup_{0\leq t\leq T}|\xbar(t)|+|\vbar(t)| \leq C(T)\sqrt{\epsilon}\,\Enone\sup_{0\leq t\leq T}\Big|\sum_{k\geq 1}\frac{\sqrt{c_k}}{\lambda_k}(z_k(t)-z_k(0))\Big|.
\end{align*}
The result now follows immediately from Proposition~\ref{prop:limit:whitenoise:lipschitz:1} below.
\end{proof}

\begin{proposition} \label{prop:limit:whitenoise:lipschitz:1} Under the same Hypothesis of Proposition~\ref{prop:limit:whitenoise:lipschitz}, suppose that  $X(t)=(x(t),v(t),z_{1}(t),\dots)$ solves~\eqref{eqn:GLE-Markov:whitenoise:epsilon} with initial conditions $(x(0),v(0),z_{1}(0),\dots)\in \Hs$.
Then,
\begin{align*}
\sqrt{\epsilon}\,\Enone\sup_{0\leq t\leq T}\Big|\sum_{k\geq 1}\frac{\sqrt{c_k}}{\lambda_k}(z_{k}(t)-z_k(0))\Big|\to 0,\quad\epsilon\to 0.
\end{align*} 
\end{proposition}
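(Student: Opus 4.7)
The plan is to use Duhamel's formula~\eqref{eqn:limit:whitenoise:1} to decompose $\sqrt{\epsilon}\sum_{k\geq 1}\frac{\sqrt{c_k}}{\lambda_k}\bigl(z_k(t)-z_k\bigr)$ into three pieces and to prove that $\mathbb{E}\sup_{0\leq t\leq T}|\cdot|$ of each piece vanishes as $\epsilon\to 0$. Multiplying the Duhamel identity by $\sqrt{\epsilon}\,\sqrt{c_k}/\lambda_k$ and summing in $k$ produces the decomposition $S_1(t)+S_2(t)+S_3(t)$, where
\begin{align*}
S_1(t) &= \sqrt{\epsilon}\sum_{k\geq 1}\tfrac{\sqrt{c_k}}{\lambda_k}(e^{-\lambda_k t/\epsilon}-1)z_k,\\
S_2(t) &= \sum_{k\geq 1}\tfrac{c_k}{\lambda_k}\int_0^t e^{-\lambda_k(t-r)/\epsilon}v(r)\,dr,\\
S_3(t) &= \sum_{k\geq 1}\sqrt{\tfrac{2c_k}{\lambda_k}}\int_0^t e^{-\lambda_k(t-r)/\epsilon}dW_k(r).
\end{align*}

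For $S_1$, the elementary inequality $|1-e^{-x}|\leq \sqrt{x}$ yields $\sup_{0\leq t\leq T}|S_1(t)|\leq \sqrt{T}\sum_{k\geq 1}\sqrt{c_k/\lambda_k}\,|z_k|$, which a weighted Cauchy--Schwarz bounds by $\sqrt{T}\bigl(\sum_k c_k k^{2s}/\lambda_k\bigr)^{1/2}\|X_0\|_{-s}$; the latter is finite under Condition (D) since the exponent $2s-1-(\alpha-1)\beta<-1$. Each summand $\sup_{0\leq t\leq T}\sqrt{\epsilon}\,(\sqrt{c_k}/\lambda_k)(1-e^{-\lambda_k T/\epsilon})|z_k|\to 0$ as $\epsilon\to 0$ and admits the summable dominant $\sqrt{Tc_k/\lambda_k}\,|z_k|$, so dominated convergence in $k$ gives $\sup_{0\leq t\leq T}|S_1(t)|\to 0$ pathwise. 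For $S_2$, the estimate $\bigl|\int_0^t e^{-\lambda_k(t-r)/\epsilon}v(r)\,dr\bigr|\leq \sup_{0\leq r\leq T}|v(r)|\cdot \min(\epsilon/\lambda_k,T)$ combines with $\sum_{k\geq 1}(c_k/\lambda_k)\min(\epsilon/\lambda_k,T)\to 0$ (dominated by the summable $c_kT/\lambda_k$) and with the $\epsilon$-uniform bound $\mathbb{E}\sup_{0\leq t\leq T}v(t)^2\leq C(T,X_0)$ from Proposition~\ref{prop:limit:whitenoise:L2bound}(a), applicable since $\Phi'$ is Lipschitz; this gives $\mathbb{E}\sup_{0\leq t\leq T}|S_2(t)|\to 0$.

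The main obstacle is $S_3$, whose treatment requires extracting a positive power of $\epsilon$ while preserving summability in $k$. Introducing weights $k^{-\mu}$ and $M_k(t):=\int_0^t e^{-\lambda_k(t-r)/\epsilon}dW_k(r)$, Cauchy--Schwarz gives
\begin{align*}
\sup_{0\leq t\leq T}|S_3(t)|^2 \leq \Bigl(\sum_{k\geq 1}k^{-2\mu}\Bigr)\sum_{k\geq 1}\tfrac{2c_k k^{2\mu}}{\lambda_k}\sup_{0\leq t\leq T}M_k(t)^2.
\end{align*}
Applying Lemma~\ref{lem:limit:zeromass} with $\kappa=1/2$ and $\eta=\lambda_k/\epsilon$, followed by Jensen's inequality, yields $\mathbb{E}\sup_{0\leq t\leq T}M_k(t)^2\leq C(T,q)(\epsilon/\lambda_k)^{(q-1)/q}$ for any $q>1$, hence
\begin{align*}
\mathbb{E}\sup_{0\leq t\leq T}|S_3(t)|^2 \leq C(T,q)\,\epsilon^{(q-1)/q}\Bigl(\sum_{k\geq 1}k^{-2\mu}\Bigr)\sum_{k\geq 1}\frac{c_k k^{2\mu}}{\lambda_k^{(2q-1)/q}}.
\end{align*}
The delicate point is that the two series converge precisely when $1<2\mu<(\alpha-2+1/q)\beta$, and this window is non-empty exactly because $(\alpha-1)\beta>1$ under Condition (D): taking $q>1$ close enough to $1$ makes $(\alpha-2+1/q)\beta$ arbitrarily close to $(\alpha-1)\beta>1$, leaving room for $\mu$. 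With such $q$ and $\mu$ fixed, $\mathbb{E}\sup_{0\leq t\leq T}|S_3(t)|^2\leq C\,\epsilon^{(q-1)/q}\to 0$, and Jensen's inequality transfers the convergence to $L^1$, completing the proof.
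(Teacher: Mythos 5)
Your proof follows the paper's argument essentially verbatim: the same Duhamel decomposition into $S_1,S_2,S_3$, the same application of Lemma~\ref{lem:limit:zeromass} to the stochastic convolution $S_3$, and the same appeal to Proposition~\ref{prop:limit:whitenoise:L2bound} for the $\epsilon$-uniform bound on $v$ in $S_2$. The only cosmetic differences are that the paper extracts explicit rates $\epsilon^{1/2-q_1}$ and $\epsilon^{1-q_2}$ via the inequality $1-e^{-x}\leq c(q)x^q$ for $q\in(0,1)$, whereas you invoke dominated convergence in $k$ (obtaining convergence but no rate), and for $S_3$ you use a free weight $k^{-\mu}$ with Cauchy--Schwarz at the $L^2$ level while the paper reuses its estimate~\eqref{ineq:limit:whitenoise:4} with weights $k^{-s}$ and a higher power $2q$.
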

\begin{proof} From~\eqref{eqn:limit:whitenoise:1}, we see that
\begin{equation}\label{ineq:limit:whitenoise:3}
\begin{aligned}
\sum_{k\geq 1}\frac{\sqrt{\epsilon c_k}}{\lambda_k}|z_k(t)-z_k(0)|
&\leq \sum_{k\geq 1}\frac{\sqrt{\epsilon c_k}}{\lambda_k}\big(e^{-\frac{\lambda_k}{\epsilon}t}-1\big)|z_k(0)|+\sum_{k\geq 1}\frac{c_k}{\lambda_k}\int_0^t e^{-\frac{\lambda_k}{\epsilon}(t-r)}|v(r)|dr\\
&\qquad+\sum_{k\geq 1}\sqrt{\frac{2c_k}{\lambda_k}}\Big|\int_0^t e^{-\frac{\lambda_k}{\epsilon}(t-r)}dW_k(r)\Big|.
\end{aligned}
\end{equation}
We aim to show that each series on the above RHS converges to zero in expectation as $\epsilon\downarrow 0$. We note that the convergence to zero of the last series follows immediately from~\eqref{ineq:limit:whitenoise:4}. For the other two terms, we shall make use of the following inequality: for $q>0$, there exists $c(q)>0$ such that for every $x\geq 0$, it holds that
\begin{equation}\label{ineq:basic:exponential}
1-e^{-x}\leq c(q)x^q.
\end{equation} 
For a positive $q_1<\frac{1}{2}$ (to be chosen later), we estimate the first sum on the RHS of~\eqref{ineq:limit:whitenoise:3} as follows. 
\begin{align*}
\sum_{k\geq 1}\sup_{0\leq t\leq T}\frac{\sqrt{\epsilon c_k}}{\lambda_k}\big(e^{-\frac{\lambda_k}{\epsilon}t}-1\big)z_k(0)&\leq c(T,q_1)\epsilon^{1/2-q_1}\frac{c_k^{1/2}}{\lambda_k^{1-q_1}}z_k(0)\\
&\leq   c(T,q_1) \epsilon^{1/2-q_1}\Big(\sum_{k\geq 1}\frac{c_k k^{2s}}{\lambda_k^{2-2q_1}}\Big)^{1/2}\Big(\sum_{k\geq 1}k^{-2s}z_k(0)^2\Big)^{1/2},
\end{align*}
where we have used~\eqref{ineq:basic:exponential} on the first line and Holder's inequality on the second line, respectively. Recalling \eqref{c-k}, we have
\begin{align*}
\sum_{k \geq 1}\frac{c_k k^{2s}}{\lambda_k^{2-2q_1}} = \sum_{k\geq 1}\frac{1}{k^{1+(\alpha+2q_1-2)\beta-2s}}.
\end{align*}
In view of Condition (D) of Assumption~\ref{cond:wellposed}, there always exists a constant $q_1\in (0,1/2)$ such that $(2q_1-1)\beta+(\alpha-1)\beta-2s>0$, which implies that the above RHS is finite. Similarly, we have
\begin{align*}
\sum_{k\geq 1}\sup_{0\leq t\leq T}\frac{c_k}{\lambda_k}\int_0^t e^{-\frac{\lambda_k}{\epsilon}(t-r)}|v(r)|dr&\leq\sum_{k\geq 1} \sup_{0\leq t\leq T} \frac{\epsilon c_k}{\lambda_k^2}\big(1-e^{-\frac{\lambda_k}{\epsilon}t}\big)\sup_{0\leq t\leq T}|v(t)|\\
&\leq c(T,q_2)\epsilon^{1-q_2}\sum_{k\geq 1}\frac{c_k}{\lambda_k^{2-q_2}}\sup_{0\leq t\leq T}|v(t)|\\
&= c(T,q_2)\epsilon^{1-q_2}\sum_{k\geq 1}\frac{1}{k^{1+(\alpha-2+q_2)\beta}}\sup_{0\leq t\leq T}|v(t)|.
\end{align*}
We invoke Condition (D) from Assumption~\ref{cond:wellposed} again to see that there exists a positive $q_2\in(0,1)$ such that $\alpha-2+q_2>0$. Choosing such $q_2$ implies that the series on the above RHS is convergent. We thus obtain the estimate
\begin{align*}
\Enone\sum_{k\geq 1}\sup_{0\leq t\leq T}\frac{c_k}{\lambda_k}\int_0^t e^{-\frac{\lambda_k}{\epsilon}(t-r)}|v(r)|dr&\leq c(T,q_2)\epsilon^{1-q_2}\Enone\sup_{0\leq t\leq T}|v(t)|\leq c(T,q_2)\epsilon^{1-q_2},
\end{align*}
where the last implication follows from Proposition~\ref{prop:limit:whitenoise:L2bound}. Putting everything together, we obtain the result.
\end{proof}

Since we will make use of exiting times, with a slightly abuse of notation, it is convenient to recall from~\eqref{eqn:stoppingtime:zeromass} for $R>0$
\begin{equation*} \label{defn:stoppingtime:whitenoise}
\sigma^R = \inf_{t\geq 0}\{|u(t)|\geq R \},\quad\text{ and }\quad \sigma^R_\epsilon = \inf_{t\geq 0}\{|x(t)|\geq R .\}
\end{equation*}
With Proposition~\ref{prop:limit:whitenoise:lipschitz} in hand, we give the proof of Theorem~\ref{thm:limit:whitenoise:probconverge}.
\begin{proof}[Proof of Theorem~\ref{thm:limit:whitenoise:probconverge}]
The arguments are almost the same as those in the proof of Theorem~\ref{thm:limit:zeromass} and hence omitted. The only difference here is the appearance of the term $|v(t)-p(t)|$. Nevertheless, we note that for $0\leq t\leq \sigma^R\mi\sigma^R_\epsilon$,
\begin{align*}
 (u(t),p(t))=(u^R(t),p^R(t))\quad \text{ and }\quad (x(t),v(t))=(x^R(t),v^R(t)),
\end{align*}
and thus the proof of Theorem~\ref{thm:limit:zeromass} is applicable.
\end{proof}
We finally turn our attention to Theorem~\ref{thm:limit:whitenoise:L1converge}. The proof is relatively short and will make use of Condition (\emph{b}) in Proposition~\ref{prop:limit:whitenoise:L2bound}.
\begin{proof}[Proof of Theorem~\ref{thm:limit:whitenoise:L1converge}] 
For given $R>0$, let $\sigma^R,\,\sigma^R_\epsilon$ be defined as in~\eqref{eqn:stoppingtime:zeromass}. As mentioned above, for $0\leq t\leq \sigma^R\mi\sigma^R_\epsilon$,
\begin{align*}
 (u(t),p(t))=(u^R(t),p^R(t))\quad \text{ and }\quad (x(t),v(t))=(x^R(t),v^R(t)).
\end{align*} 
We then have a chain of implications
\begin{equation*}
\begin{aligned}
\MoveEqLeft[3]
\Enone\Big[\sup_{0\leq t\leq T}|x(t)-u(t)|^q+|v(t)-p(t)|^q \Big] \\
&=
\Enone\Big[\Big(\sup_{0\leq t\leq T}|x(t)-u(t)|^q+|v(t)-p(t)|^q \Big)1_{\{\sigma^R\mi\sigma^R_\epsilon<T\}}\Big]\\
&\qquad\qquad\qquad
+\Enone\Big[\Big(\sup_{0\leq t\leq T}|x(t)-u(t)|^q+|v(t)-p(t)|^q \Big)1_{\{\sigma^R\mi\sigma^R_\epsilon>T\}}\Big]\\
&\leq \Enone\Big[\Big(\sup_{0\leq t\leq T}|x(t)-u(t)|^q+|v(t)-p(t)|^q \Big)1_{\{\sigma^R\mi\sigma^R_\epsilon<T\}}\Big]\\
&\qquad\qquad\qquad
+\Enone\Big[\sup_{0\leq t\leq T}|x^R(t)-u^R(t)|^q+|v^R(t)-p^R(t)|^q\Big].
\end{aligned}
\end{equation*}
On one hand, in view of Proposition~\ref{prop:limit:whitenoise:lipschitz}, since $1\leq q<2$, it holds that
\begin{align*}
\Enone\Big[\sup_{0\leq t\leq T}|x^R(t)-u^R(t)|+|v^R(t)-p^R(t)|\Big]\to 0,\quad\epsilon\to 0.
\end{align*}
On the other hand, we invoke Holder's inequality with $\frac{q}{2}+\frac{1}{q*}=1$ to estimate
\begin{multline*}
\Enone\Big[\Big(\sup_{0\leq t\leq T}|x(t)-u(t)|^q+|v(t)-p(t)|^q \Big)1_{\{\sigma^R\mi\sigma^R_\epsilon<T\}}\Big]\\
\leq c\Big(\Enone\Big[\sup_{0\leq t\leq T}x(t)^2+v(t)^2\Big]+\Enone\Big[\sup_{0\leq t\leq T}u(t)^2+p(t)^2\Big] \Big)^{q/2}\Big(\P{\sigma^R\mi\sigma^R_\epsilon<T}\Big)^{1/q*}.
\end{multline*}
Notice that by Markov's inequality, we have
\begin{align*}
\P{\sigma^R\mi\sigma^R_\epsilon<T}&\leq \P{\sigma^R<T}+\P{\sigma^R_\epsilon<T}\\
&\leq\Pnone\Big\{\sup_{0\leq t\leq T}|u(t)|\geq R\Big\}+\Pnone\Big\{\sup_{0\leq t\leq T}|x(t)|\geq R\Big\}\\
&\leq \frac{\Enone\big[\sup_{0\leq t\leq T}|u(t)|^2\big]+\Enone\big[\sup_{0\leq t\leq T}x(t)^2\big]}{R^2}.
\end{align*}
The result now follows immediately from~\eqref{ineq:whitenoise:limit:bound} and Proposition~\ref{prop:limit:whitenoise:L2bound} by first taking $R$ sufficiently large and then shrinking $\epsilon$ further to zero. The proof is thus complete.
\end{proof}

\section{Discussion} We have established rigorous results on the asymptotical analysis of an infinite-dimensional GLE when the memory kernel $K(t)$ has a power-law decay, i.e. $K(t)\sim t^{-\alpha}$ as $t\to\infty$. With regards to the small-mass limit, we are able to obtain the convergence in probability of the GLE for every exponent constant $\alpha>0$. However, in the white-noise limit, a similar convergence was established only when $\alpha>1$. The method that we employed was not able to extend the result when $\alpha\in(0,1]$, which is interestingly also the barrier for the unique ergodicity of~\eqref{eqn:GLE-Markov} \cite{glatt2018generalized}. As mentioned earlier in Remark~\ref{ref:whitenoise}, our technique in the white-nosie limit requires that the memory be integrable ($\alpha>1$) for the analysis of the solutions as well as the asymptotical behaviors. It therefore remains an open question whether the solution's energy is still bounded uniformly and there exists a limiting system. 

Finally, another question for future works is whether one can take both limits in sequence, which means that the small-mass variable $m$ is written as an order of $\epsilon$, the white-noise variable. It is not clear that the theorems presented in this work combined together are able to produce an explicit answer. We note that a similar study in finite-dimensional setting was carried out in~\cite{lim2017homogenization}. Yet, we have not been able to see if the same method can be applied to our infinite system. We believe handling this case will require a more substantial work.

\section*{Acknowledgement} The author would like to thank Nathan Glatt-Holtz, David Herzog and Scott McKinley for many fruitful discussions. The author is also grateful for support through grant NSF DMS-1644290.

\bibliographystyle{plain}
\bibliography{nonlinear-gle}

\end{document}